\documentclass[11pt,reqno]{amsart}

\usepackage{a4wide}
\usepackage{amsmath,amssymb} 
\usepackage{bbm}
\usepackage{color}
\usepackage{tikz}
\usepackage{tkz-graph}
\usepackage{tikz-cd} 
\usepackage{dsfont}
\usepackage{amscd}
\usepackage{graphicx}
\usepackage{mathtools}
\usepackage[colorinlistoftodos,prependcaption,textsize=small]{todonotes}
\usepackage{enumerate}

\usepackage{bm}

\theoremstyle{plain}
\newtheorem{theorem}{Theorem}[section]
\newtheorem{prop}[theorem]{Proposition}
\newtheorem{lemma}[theorem]{Lemma}
\newtheorem{coro}[theorem]{Corollary}

\theoremstyle{definition}

\newtheorem{remark}[theorem]{Remark}

\newcommand{\N}{{\mathbb N}}

\newcommand{\mc}{\mathcal}
\newcommand{\im}{{\mathrm{i}}}

\newcommand{\me}{\mathrm{e}}
\newcommand{\dd}{\,\mathrm{d}}
\newcommand{\ts}{\hspace{0.5pt}}

\usepackage{mathtools}

\usepackage{hyperref}

\begin{document}

\title[Parameter dependence of thermodynamic characteristics]{On a family of singular potentials: Parameter dependence of thermodynamic characteristics}

\author{Philipp Gohlke}
\address{Faculty of Mathematics and Computer Science, Friedrich Schiller University, \newline
\hspace*{\parindent}Ernst-Abbe-Platz 2, 07745 Jena, Germany}
\email{philipp.gohlke@uni-jena.de}

\author{Georgios Lamprinakis}
\address{V\"{a}xj\"{o}, Sweden}
\email{georgios.lamprinakis@gmail.com}

\author{J\"{o}rg Schmeling}
\address{Lund University, Centre for Mathematical Sciences, \newline
\hspace*{\parindent}Box 118, 221 00 Lund, Sweden}
\email{joerg@math.lth.se}

\subjclass[2020]{37C45,37D35}
\keywords{multifractal analysis, variational pressure, unbounded potential}

\begin{abstract}
We consider the family of singular potentials $\psi_c = 2 \log(|\sin(\pi(x-c))|)$, $c\in \mathbb{T}$ over the doubling map and we examine the dependence of several thermodynamic and multifractal characteristics on the position of the singularity $c$. This includes the pressure functions $\mathcal P(t \psi_c)$, the Birkhoff spectrum of $\psi_c$, and the $L^q$ spectrum of the associated equilibrium measure $\mu_c$. For every $c \in \mathbb{T}$, it is known that $\mu_c$ is given by the diffraction measure of a generalized Thue{\ts}--Morse sequence, with the classical Thue{\ts}--Morse measure arising for $c = 0$. 
If $t\geqslant 0$, we show that $c \mapsto \mathcal{P}(t\psi_c)$ is continuous in $c$. If $t<0$, we prove that the function $c \mapsto \mathcal{P}(t\psi_c)$ is lower semicontinuous but not continuous. In this case, we show that the continuity points are precisely those values $c$ such that $\mathcal{P}(t\psi_c) = \infty$, which form a residual set of vanishing Hausdorff dimension in $\mathbb{T}$. We obtain similar statements about the parameter (semi-)continuity of the $L^q$ spectrum and the Birkhoff spectrum.
\end{abstract}

\maketitle

\section{Introduction}
Multifractal analysis provides a more refined analysis of the statistical properties of the underlying dynamics. For the case of H\"older continuous potentials over expanding systems this is achieved through the tools of the classic thermodynamic formalism. This formalism also yields the scaling properties and the dimension spectrum of the corresponding (unique) equilibrium measure, relying on the fact that such equilibrium measures exhibit a Gibbs property \cite{Bowen, Falconer, Pesin-book, Ruelle}.
In this setting, the so-called multifractal miracle shows the intimate connection of the Birkhoff spectrum of a H\"{o}lder continuous potential $\psi\colon X \to \mathbb{R}$ and the corresponding pressure function, while also revealing some remarkable regularity properties for the corresponding pressure function. More precisely, consider an expanding map $T$ on $X$, and
\[
b_{\psi}(x) = \lim_{n\to \infty}\frac{1}{n}S_n\psi (x), \quad S_n\psi (x)= \sum_{i=0}^{n-1}\psi(T^ix),
\]
whenever the limit exists,
and the respective dimension spectrum
\[
f_{\psi}(\beta) = \dim_H\{ x\in X \colon b_{\psi}(x)=\beta \}.
\]
The topological pressure $\mathcal{P}_{\text{top}}(\psi)$ as defined in \cite{Bowen}  coincides in this setting with the variational pressure
\[
\mathcal{P}_{\text{var}}(\psi) = \sup  h_{\mu} + \int \psi \dd \mu,
\]
where the supremum is taken over all the $T$-invariant Borel probability measures on the phase space $X$.
For sufficiently regular dynamics like a Bernoulli shift, or the doubling map on the torus, $f_{\psi}(\beta)$ is up to a multiplicative constant given by
\[
p^{\ast}(\beta) = \inf\{ p(t)-t\beta \colon t\in \mathbb{R} \},
\]
the Legendre transform of the pressure function $p(t) := \mathcal{P}_{\text{top}}(t \psi) = \mathcal{P}_{\text{var}}(t \psi)$, which is a strictly convex, real analytic function on $\mathbb{R}$ \cite{PW97, PW01}.
The topological pressure exhibits some continuity properties even for less regular potentials. In particular, for a continuous potential over a compact metric space one has that the function $\psi \mapsto \mathcal{P}_{\text{top}}(\psi)$ is Lipschitz continuous \cite{Pesin-book}.
Generalizations of this multifractal formalism have appeared, deviating in many directions from this classical dynamical setup \cite{Feng-Olivier, Iommi-Todd, Johansson-Jordan-Oberg-Pollicott}.
We refer to  \cite{Climenhaga} for a very comprehensive survey.

In this paper, we are concerned with potentials with weaker regularity conditions. More precisely, we are interested in unbounded potentials. These potentials appear naturally, for example from a number theoretic point of view, in the context of continued fraction expansion \cite{FLWW, LR}.
They make their appearance in other contexts as well, as for example for countable expanding Markov systems, where the corresponding shift spaces are defined over infinite alphabets, arising naturally from the dynamical setting \cite{Iommi-Jordan}; see also \cite{Sarig}. Countable Markov shifts appear quite naturally for the multifractal analysis of unbounded potentials in different settings as well, such as the  Saint Petersburg potential over the doubling map \cite{KLRW};  here coming from the structure of the potential itself.

We consider a family of potentials over the doubling map $(\mathbb{T}, T)$, given by
\[
\psi_c \colon \mathbb{T} \to [-\infty, +\infty), \quad x\mapsto  2 \log (| \sin(\pi (x-c))|),
\]
with $c \in \mathbb{T}$. Each $\psi_c$ is a non-positive function that has a logarithmic pole at $c$. One can verify that the function $g_c(x) = \big(1 -  \cos(2\pi (x-c)) \big)/2 = \sin^2(\pi (x-c))$ is a $g$-function in the sense of Keane \cite{Keane}. Therefore, by the result of Ledrappier \cite{Ledrappier}, $\psi_c = \log g_c(x)$ admits a unique equilibrium, $\mu_c$ with 
\[
\mathcal{P}_{\text{var}}(\psi) = h_{\mu_c}+ \int \psi_c \dd \mu_c = 0.
\]
The equilibrium measure $\mu_c$ can be also represented as an infinite Riesz product
\[
\mu_{c} = \prod_{m=0}^{\infty} \bigl( 1 - \cos(2\pi (2^m x - c)) \bigr),
\]
to be understood as a weak limit of absolutely continuous probability measures on the torus \cite{GKS}. 
For $c=0$, $\psi_0 = 2 \log (| \sin(\pi x)|)$ has the classic Thue{\ts}--Morse measure as its unique equilibrium measure, 
\[
\mu_0=\mu_{\operatorname{TM}} = \prod_{m=0}^{\infty} \bigl( 1 - \cos(2\pi 2^m x) \bigr).
\]
We refer to \cite{Queff} for a survey on results about the Thue{\ts}--Morse measure. More generally, it was shown in \cite[Prop.~2.4]{GKS} that $\mu_c$ is the diffraction measure of the generalized Thue{\ts}--Morse sequence $t^c$, given by
\[
t^c_n = \me^{2\pi \im (c + 0.5) S_2(n)},
\]
where $S_2(n)$ is the sum of digits in the binary expansion of $n$.
It is worth noting at this point that $\mu_{\frac{1}{2}}$ coincides with the point mass at the origin $\delta_0$, while for $c\neq 1/2$, $\mu_c$ is purely singular continuous with full topological support \cite{BCEG}.

These (generalized) Thue{\ts}--Morse sequences also give rise to (generalized) Thue{\ts}--Morse polynomials which have been studied in number theory and  harmonic analysis after Mahler's \cite{Mahler} and Gelfond's \cite{Gelfond} results.
The multifractal properties of $\psi_0$ (and $\mu_{\operatorname{TM}}$) were studied, for example, in \cite{BGKS, GLS}, where the latter reveals, yet again, a very natural, although a priori indirect, connection between unbounded potentials and countable Markov shifts. 

The general case was considered in \cite{FSS18} by Fan, Schmeling and Shen, where they studied the scaling properties of the $L^{\infty}$-norm of the Birkhoff sums for the potential $\psi_c$, $c\in \mathbb{T}$.  A complete multifractal analysis was provided in \cite{FSS22}, by the same authors, that showcases that the connection between the Birkhoff spectrum and the Legendre transform of the pressure function persists, when considering a variant of the (variational) pressure function 
\[
p_c(t) : = \mathcal{P}(t \psi_c) = \sup_{\mu \in \mathcal{M}_c} h(\mu) +   \int t \psi_c(x) \dd \mu(x),
\]
where $\mathcal{M}_c$ denotes the set of all invariant probability measures on $\mathbb{T}$ (under the doubling map $T$) that do not have $c$ in their support. 
The domain of the Legendre transform $p^{\ast}_c$ is an interval $(\alpha(c),\beta(c))$, where the lower endpoint is obtained from ergodic optimization as 
\[
\alpha(c) = \inf_{\mu \in \mc M} \int \psi_c \dd \mu,
\]
with $\mc M$ being the space of $T$-invariant Borel probability measures on $\mathbb{T}$, and similarly the upper endpoint is given by $\beta(c) = \sup_{\mu \in \mc M} \int \psi_c \dd \mu$.

\begin{theorem}[{\cite[Thm.~B]{FSS22}}]
\label{THM:Birkhoff-spectrum}
Let $c\in \mathbb{T} \setminus \{1/2 \}$, then, for all $\beta \in (\alpha(c),\beta(c))$
\[
f_{\psi_c}(\beta) =  \frac{p^{\ast}_c(\beta)}{\log 2}.
\]
Furthermore, $f_{\psi_c}(\beta) =0$ for all $\beta < \alpha(c)$ and $\beta\geqslant \beta(c)$.
\end{theorem}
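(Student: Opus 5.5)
The plan is to prove the two inequalities $f_{\psi_c}(\beta)\leqslant p_c^*(\beta)/\log 2$ and $f_{\psi_c}(\beta)\geqslant p_c^*(\beta)/\log 2$ separately. The main tool is to approximate $\psi_c$ by potentials that stay away from the singularity. For $\delta>0$, let $X_\delta\subset\mathbb{T}$ be the compact $T$-invariant set of points whose whole forward orbit avoids the open $\delta$-neighbourhood of $c$. On $X_\delta$ the potential $\psi_c$ is real analytic, hence H\"older, and bounded. For $\delta$ small, $X_\delta$ contains a mixing subshift of finite type, a countable union of cylinder-Markov pieces. On such a subshift the classical multifractal formalism (Pesin, Weiss) applies: for the restricted pressure $p_{c,\delta}(t)=\mathcal{P}_{X_\delta}(t\psi_c)$, which is real analytic and convex, the level sets satisfy $\dim_H\{x\in X_\delta: b_{\psi_c}(x)=\beta\}=p_{c,\delta}^*(\beta)/\log 2$ for $\beta$ in the interior of the restricted spectrum. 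Every $\mu\in\mathcal{M}_c$ is supported in some $X_\delta$, and measures on $X_\delta$ can be approximated in entropy and integral by measures on the subshift. Hence $p_{c,\delta}\uparrow p_c$ pointwise, and by convexity $p_{c,\delta}^*\to p_c^*$ on $(\alpha(c),\beta(c))$. This gives the lower bound, since the $\delta$-level sets are subsets of the full level set.

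For the upper bound, I would use a covering argument on dyadic cylinders. Fix $\beta$ and $t\in\mathbb{R}$. For points with $b_{\psi_c}(x)=\beta$, the $n$-cylinders $I_n(x)$ satisfy $\sup_{I_n(x)} S_n\psi_c\approx n\beta$, up to an error that I must control near $c$. The cost of covering is then bounded by $\sum_{I_n}\exp(t S_n\psi_c - tn\beta)$, whose exponential growth rate is the topological pressure of $t\psi_c$ over cylinders. The crux is to show that this cylinder pressure equals $p_c(t)$. That is, orbits passing very close to $c$ contribute nothing beyond what measures avoiding $c$ already give. For $t\geqslant0$ this holds because $\psi_c\leqslant0$ near $c$ and the close visits are negative. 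For $t<0$ these visits blow up the sum, and I would control them using the quantitative closeness estimates of Fan--Schmeling--Shen \cite{FSS18}. The key input there is that a point of $b_{\psi_c}$-level $\beta$ can approach $c$ only at a controlled rate, so the excursions $\log|\sin\pi(T^ix-c)|$ are sublinear along a density-one set of times. I would handle this by decomposing each orbit into excursions near $c$ and free parts, in the spirit of an inducing scheme or countable Markov shift. I expect this upper bound, the equality of cylinder pressure with $p_c$ in the presence of the pole, to be the main obstacle.

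Finally come the boundary statements. If $\beta<\alpha(c)$ or $\beta>\beta(c)$, any point with Birkhoff average $\beta$ would produce a limit invariant measure $\mu$ with $\int\psi_c\,d\mu=\beta$. This requires upper semicontinuity of $\mu\mapsto\int\psi_c\,d\mu$, which holds since $\psi_c$ is upper semicontinuous. The upper endpoint then follows directly, and at $\beta<\alpha(c)$ the level set is empty. At $\beta=\beta(c)$ I would show that $p_c^*(\beta(c))=0$, i.e.\ the entropy of maximizing measures vanishes, and apply the upper bound from the previous step as a limit.
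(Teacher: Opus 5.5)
The paper does not prove this theorem; it is quoted from \cite{FSS22}, so your proposal has to stand on its own, and it has a genuine gap.

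\textbf{What works.} Your lower bound is fine in outline: the classical formalism on the mixing SFTs $\mathbb{X}_n$ (which avoid $B_{2^{-(n+1)}}(c)$), whose pressures increase to $p_c$, followed by convergence of the Legendre transforms. You should add that $(\alpha(c),\beta(c))$, defined via $\mc M$, is the interior of $\{\int\psi_c\dd\mu:\mu\in\mc M_c\}$ (Corollary~\ref{CORO:alpha-beta-version}). Your covering bound also works for $t\geqslant 0$. There $t\psi_c$ is upper semicontinuous and bounded above, so the $\sup$-cylinder pressure is at most $\sup_{\mc M}p_c^{\mu}(t)=p_c(t)$ (Theorem~\ref{THM:pressure-variations}). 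By Lemma~\ref{LEM:Legendre-transform-minimizer} this gives $f_{\psi_c}(\beta)\leqslant p_c^{\ast}(\beta)/\log 2$ for $\beta\geqslant -2\log 2$. For $c\in\mc C_\infty$ the remaining range is trivial, since $p_c^{\ast}(\beta)=\log 2$ for $\beta\leqslant-2\log 2$.

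\textbf{The main gap.} What is missing is the upper bound for $c\notin\mc C_\infty$ and $\beta\in(\alpha(c),-2\log 2)$, where the Legendre infimum sits at $t<0$. This is the actual content of the theorem, and you only label it the main obstacle.

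The cylinder sums you describe do not work there. For $t<0$ a cylinder $I$ receives weight $\gtrsim \me^{-\varepsilon n|t|}$ only if $S_n\psi_c$ is evaluated at a value $\leqslant n(\beta+\varepsilon)$, for example $\inf_I S_n\psi_c$. But this equals $-\infty$ on the half of all $n$-cylinders that contain a point of $T^{-(n-1)}\{c\}$.

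Sublinearity of the single terms $\psi_c(T^ix)$ is automatic from convergence of the averages. It says nothing about the \emph{cumulative} contribution of close visits to $c$, which can be a fixed fraction of $S_n\psi_c(x)$. You would need to show that this contribution is realised by invariant measures, i.e.\ that the number of $n$-words relevant for the level set is at most $\me^{n(p_c(t)-t\beta+\varepsilon)}$. The usual weak-limit argument breaks down exactly here: for $t<0$ the map $\mu\mapsto\int t\psi_c\dd\mu$ is only lower semicontinuous, so mass near $c$ can disappear in the limit. Neither the appeal to \cite{FSS18} nor the inducing/countable-Markov idea is developed into such a bound.

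\textbf{The boundary statements.} These also need repair.

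For $\beta<\alpha(c)$: a limit $\mu$ of empirical measures of a point $x$ with $b_{\psi_c}(x)=\beta$ satisfies only $\int\psi_c\dd\mu\geqslant\beta$, by upper semicontinuity, not equality. This excludes $\beta>\beta(c)$ but says nothing about $\beta<\alpha(c)$. The emptiness claim there is true, but needs another argument. One option is to compare $x$ with the $n$-periodic point $y_n$ sharing its first $n$ binary digits.
\begin{itemize}
\item Indices $i$ with $d(T^ix,c)\geqslant 2^{-(n-i)/2}$ contribute a summable error.
\item The remaining indices satisfy $n-i\lesssim|\psi_c(T^ix)|=o(n)$, so they lie in $[(1-\varepsilon)n,n)$.
\item On that window $\psi_c(T^iy_n)-\psi_c(T^ix)\leqslant|\psi_c(T^ix)|$. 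Since all $\psi_c(T^ix)$ have the same sign, the sum of these is $|S_n\psi_c(x)-S_{(1-\varepsilon)n}\psi_c(x)|\leqslant\varepsilon n|\beta|+o(n)$.
\end{itemize}
Hence $S_n\psi_c(x)\geqslant S_n\psi_c(y_n)-\varepsilon n|\beta|-o(n)\geqslant n\alpha(c)-\varepsilon n|\beta|-o(n)$, which gives $\beta\geqslant\alpha(c)$.

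For $\beta=\beta(c)$: your bound gives $\dim_H\leqslant\lim_{t\to\infty}(p_c(t)-t\beta(c))/\log 2$, which is the largest entropy of a $\psi_c$-maximizing measure. That this vanishes is false for general potentials (take a constant one). It must therefore be imported from the ergodic-optimisation analysis of this particular family, which you only announce.
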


For $c=0$, a similar result was already established by Baake, Gohlke, Kesseb\"ohmer and Schindler \cite{BGKS}.
Fan, Schmeling and Shen also provided in \cite{FSS22} description of the points $c$ that give an infinite lower bound for the domain of the Legendre transform. More precisely, the interval $(\alpha(c),\beta(c))$ contains a halfline in topologically typical and metrically rare cases.
\begin{theorem}[{\cite[Thm.~A]{FSS22}}]
\label{THM:lower-endpoint}
The set $\mc C_{\infty} := \{c \in \mathbb{T}: \alpha(c) = - \infty \}$ is a residual set of $0$ Hausdorff dimension. Furthermore, the complement of $\mc C_{\infty}$ is dense in $\mathbb{T}$.
\end{theorem}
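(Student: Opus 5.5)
Writing $\mc C_\infty=\{c\in\mathbb{T}:\alpha(c)=-\infty\}$, the statement has three parts: $\mc C_\infty$ is residual, $\dim_H\mc C_\infty=0$, and $\mathbb{T}\setminus\mc C_\infty$ is dense. Everything will be reduced to the dynamics of $c$ itself, using periodic measures. For a periodic orbit $\mathcal O$ of $T$ of period $n$, with equidistributed measure $\mu_{\mathcal O}$, we have
\[
\int \psi_c \dd\mu_{\mathcal O}=\frac{2}{n}\sum_{y\in\mathcal O}\log|\sin(\pi(y-c))|,
\]
which is very negative when $c$ is extremely close to $\mathcal O$ relative to $2^{-n}$. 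Conversely, for a general invariant measure, $\int\psi_c\dd\mu\ge -K$ holds as long as $\mu$ does not give too much mass to shrinking neighbourhoods of $c$; this will be controlled by how fast the orbit of $c$ recurrently approaches $c$.

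\textbf{Residuality.} Write $\mc C_\infty=\bigcap_{N}U_N$ with
\[
U_N=\Bigl\{c:\ \exists \text{ periodic }\mathcal O \text{ with } \int\psi_c\dd\mu_{\mathcal O}<-N\Bigr\}.
\]
Since $c\mapsto\int\psi_c\dd\mu_{\mathcal O}$ is continuous into $[-\infty,0]$ and equals $-\infty$ exactly when $c\in\mathcal O$, each $U_N$ is open. It also contains all periodic points, which are the dyadic-odd rationals and are dense. Hence $\bigcap_N U_N$ is a dense $G_\delta$. It remains to check $\bigcap_N U_N\subset\mc C_\infty$, which is immediate because $\alpha(c)\le\int\psi_c\dd\mu_{\mathcal O}<-N$ for every $N$. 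The reverse inclusion is not needed for residuality.

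\textbf{Density of the complement.} Take $c=\tfrac12$ or, more generally, points whose orbit stays far from $c$, for instance $c$ in a $T$-invariant Cantor set bounded away from itself under iteration. It is cleaner to use the following argument. If the orbit closure of $c$ avoids a neighbourhood of $c$ beyond time $0$, i.e. $|T^k c-c|\ge\delta$ for all $k\ge1$, then every invariant $\mu$ satisfies
\[
\int\psi_c\dd\mu\ge -C(\delta).
\]
To prove this, use the local estimate: for $x$ near $c$, the orbit of $x$ shadows the orbit of $c$ for about $\log_2(1/|x-c|)$ steps and so stays away from $c$. Consequently, each time the Birkhoff sum incurs the cost $\log|x-c|$, this is amortised over about $\log_2(1/|x-c|)$ steps of bounded cost, which gives a uniform lower bound on $\frac1n S_n\psi_c$. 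Such $c$ are dense: given an interval, choose $c$ whose binary expansion begins with a prescribed word and continues with a tail avoiding long matches with itself, e.g. a suitable Sturmian-type tail.

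\textbf{Zero dimension.} This is the main obstacle. By the amortisation argument above, $\alpha(c)=-\infty$ forces the recurrence
\[
\liminf_k \frac{-\log|T^kc-c|}{k}=\infty
\]
in a quantitative sense: there must be $k$ with $|T^k c-c|<2^{-kM}$ for arbitrarily large $M$. For fixed $M$, the set of such $c$ with $k\ge k_0$ is covered, for each $k$, by $2^k$ intervals of length of order $2^{-kM}$, namely neighbourhoods of the period-$k$ points. The resulting $s$-dimensional sum is
\[
\sum_{k\ge k_0}2^{k}\,2^{-skM},
\]
which is finite and tends to $0$ as $k_0\to\infty$ once $sM>1$. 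Therefore the set has dimension at most $1/M$, and $\dim_H\mc C_\infty=0$. The delicate step is making the amortisation lemma precise enough to yield both this necessary recurrence condition and the uniform bound used in the density part.
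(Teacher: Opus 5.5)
The paper gives no proof of this statement; it is quoted from \cite{FSS22}, so your argument has to stand on its own. The residuality part is correct. Each $U_N$ is open and contains the dense set of periodic points, and $\bigcap_N U_N\subset\mc C_\infty$. In fact equality holds, since periodic measures are dense in $\mc M$ and $\psi_c$ is upper semicontinuous. The density part is also correct in substance: if $\inf_{k\geqslant1}|T^kc-c|=\delta>0$, your shadowing estimate does give $\int\psi_c\dd\mu\geqslant -C(\delta)$ for all $\mu\in\mc M$. Your choice of such $c$ is vague, however, and a Sturmian tail is a bad suggestion, since Sturmian sequences are uniformly recurrent and, as shown below, Sturmian points lie in $\mc C_\infty$. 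Simply take strictly preperiodic points, e.g.\ the nonzero dyadic rationals: their forward orbit is finite and does not contain $c$, and they are dense.

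The zero-dimension part has a genuine gap. The claimed consequence of the amortisation, that $\alpha(c)=-\infty$ forces $|T^kc-c|<2^{-kM}$ for some $k$ for every $M$, is false. Let $c$ have the Fibonacci word $f=0100101001001\ldots$ as binary expansion.
\begin{itemize}
\item Since $f$ contains neither $11$ nor $000$, and its factors have exponent less than $2+\phi$, a common prefix of $S^kf$ and $f$ has length less than $(1+\phi)k$. Hence $|T^kc-c|\geqslant 2^{-6k}$ for all $k\geqslant1$.
\item The Fibonacci subshift is linearly recurrent (alternatively, use the three-gap theorem). So its unique invariant measure gives every factor of length $L$ mass at least $c_0/L$.
\item For the image $\mu\in\mc M$ of this measure on $\mathbb{T}$, this yields $\mu(B_{2^{-L}}(c))\geqslant c_0/(L+1)$ for all $L$. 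Hence $\int\log\|x-c\|\dd\mu(x)=-\infty$, so $\int\psi_c\dd\mu=-\infty$ and $c\in\mc C_\infty$.
\end{itemize}

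The amortisation breaks down for the following reason. Within the tracking window after a visit at depth $d$, the orbit can return to $c$ at depths governed by $c$'s own return depths $-\log_2|T^ic-c|$. Nothing in the hypothesis $|T^kc-c|\geqslant 2^{-Mk}$ prevents these from exceeding $L$ for a proportion $\asymp 1/L$ of lags at every scale $L$. Their average over a window of length $W$ can therefore grow like $\log W$. What the shadowing argument really gives under that hypothesis is only a separation of visits to $B_{2^{-L}}(c)$ by $\gtrsim L/(M+1)$ steps, i.e.\ $\mu(B_{2^{-L}}(c))\lesssim (M+1)/L$. The sum of these bounds over $L$ diverges, and Sturmian measures saturate them.

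Consequently, your Jarn\'ik-type covering only handles the subset of $\mc C_\infty$ consisting of points super-exponentially close to periodic points. A proof that $\dim_H\mc C_\infty=0$ must also cover parameters $c$ at which some invariant (e.g.\ periodic) measure gives $B_{2^{-L}}(c)$ mass $\gtrsim 1/L$ over many scales. That is a low-complexity, zero-entropy-type constraint on the binary expansion of $c$, not a Diophantine one.
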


Theorem~\ref{THM:Birkhoff-spectrum} motivates to examine the regularity properties of the pressure function. 
Observe that $\psi_c(x) = \psi_0(x-c)$, which allows us to interpret the potentials $\psi_c$ as rotated versions of $\psi_0$ (or any other $\psi_{c'}$ as a matter of fact).  With that in mind, we are interested in studying the dependence of the pressure function on the singularity $c\in \mathbb{T}$. 
More precisely, we investigate the continuity properties of the map
\[
\mathbb{T}\to (-\infty, +\infty], \quad c\mapsto p_c(t).
\]
It is worth mentioning here the result of Keller \cite[Theorem~4.2.11]{Keller-book}, where the class of upper semicontinuous potentials $\phi\colon X \to [-\infty, +\infty)$ is considered, showing that under perturbations  of the potential, that uniformly decay to zero, the pressure function is continuous. In our case, this result cannot be applied, as the rotations of the potential give rise to ``perturbations" that cannot be controlled nicely in norm.
In contrast, we can derive from Theorem~\ref{THM:lower-endpoint} that one cannot hope for continuity of the aforementioned map for $t<0$.

\section{Main Results}
The first result concerns the description of parameters $c$ that give infinite pressure, which ultimately causes the discontinuity of the pressure function for $t<0$.

\begin{prop}
\label{PROP:infinite-pressure}
Let $t<0$. Then, $p_c(t) = \infty$ if and only if $c \in \mc C_{\infty}$.
\end{prop}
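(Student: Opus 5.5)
We prove both directions by comparing $p_c(t)$ with $\sup\{|t|\cdot(-\int\psi_c\dd\mu)\}$ over measures in $\mathcal M_c$, using only that entropy of the doubling map is bounded by $\log 2$ and nonnegative. For $t<0$ and $\mu\in\mathcal M_c$ we have
\[
|t|\Bigl(-\int\psi_c\dd\mu\Bigr) \leqslant h(\mu)+t\int\psi_c\dd\mu \leqslant \log 2 + |t|\Bigl(-\int\psi_c\dd\mu\Bigr),
\]
so $p_c(t)=\infty$ if and only if $\inf_{\mu\in\mathcal M_c}\int\psi_c\dd\mu=-\infty$. It therefore remains to show that this infimum over $\mathcal M_c$ equals $\alpha(c)=\inf_{\mu\in\mathcal M}\int\psi_c\dd\mu$, at least as far as being $-\infty$ is concerned.

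\emph{The implication $p_c(t)=\infty\Rightarrow c\in\mathcal C_\infty$.} This direction is immediate, since $\mathcal M_c\subset\mathcal M$ gives $\inf_{\mathcal M}\leqslant\inf_{\mathcal M_c}=-\infty$.

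\emph{The implication $c\in\mathcal C_\infty\Rightarrow p_c(t)=\infty$.} Suppose $\alpha(c)=-\infty$, and fix $K>0$. We need some $\nu\in\mathcal M_c$ with $\int\psi_c\dd\nu<-K$. First choose $\mu\in\mathcal M$ with $\int\psi_c\dd\mu<-2K$ (the value $-\infty$ is allowed). Since $\psi_c\leqslant0$, the truncations $\psi_c^{(N)}=\max(\psi_c,-N)$ are continuous and decrease to $\psi_c$, so monotone convergence yields $N$ with $\int\psi_c^{(N)}\dd\mu<-2K$. Ergodic decomposition replaces $\mu$ by an ergodic measure with the same property.

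The key step is to approximate this ergodic $\mu$ in the weak-$*$ topology by measures from $\mathcal M_c$. Periodic orbit measures are dense among ergodic measures for the doubling map (specification/closing), so we may take periodic measures $\nu_k\to\mu$. Only countably many periodic orbits pass through $c$, and we can perturb each periodic orbit (e.g.\ by changing a far-out block of the symbolic word to get a nearby distinct periodic orbit) so that it avoids $c$ while staying weak-$*$ close. Since each $\nu_k$ is finitely supported away from $c$, we have $\nu_k\in\mathcal M_c$. Continuity of $\psi_c^{(N)}$ then gives $\int\psi_c\dd\nu_k\leqslant\int\psi_c^{(N)}\dd\nu_k<-K$ for large $k$. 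Hence $\inf_{\mathcal M_c}\int\psi_c\dd\mu=-\infty$, and by the displayed inequality $p_c(t)\geqslant|t|K$ for every $K$, so $p_c(t)=\infty$.

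The main obstacle is this density step: producing invariant measures whose support avoids $c$ while remaining weak-$*$ close to $\mu$. Periodic-measure density for the full shift is standard. The only subtlety is excluding the at most countably many (in fact, at most one) periodic orbits containing $c$, and a small symbolic modification handles this. Using the upper bound $\psi_c^{(N)}$ sidesteps any failure of upper semicontinuity of $\mu\mapsto\int\psi_c\dd\mu$ in the direction we need.
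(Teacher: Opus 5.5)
Your proof is correct. It shares with the paper the reduction via $0\leqslant h(\mu)\leqslant\log 2$ to the statement $\inf_{\mu\in\mathcal M_c}\int\psi_c\dd\mu=-\infty$. The key step, passing from $\mathcal M$ to $\mathcal M_c$, is done differently.

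The paper simply invokes Corollary~\ref{CORO:alpha-beta-version}. It derives that corollary from the full variational equality of Theorem~\ref{THM:pressure-variation-full-t}, by letting $t\to-\infty$ in $p_c(t)/t$. For $t<0$ that equality itself rests on the truncations $\psi_c^N$, Corollary~\ref{CORO:Hoelder-pressure-variation}, and the SFT approximation of Proposition~\ref{PROP:SFT-approximation}.

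You instead approximate directly in the weak-$*$ topology by periodic orbit measures. These avoid the at most one periodic orbit through $c$, replacing $w^\infty$ by $(w^k u)^\infty$ when needed. You then only need the one-sided bound $\psi_c\leqslant\psi_c^{(N)}$, which is exactly the right semicontinuity direction for an infimum. Entropy is irrelevant to whether $p_c(t)$ is infinite, so zero-entropy periodic measures suffice. This makes your argument shorter and self-contained. Run with finite values, it even gives the full identity $\alpha(c)=\inf_{\mu\in\mathcal M_c}\int\psi_c\dd\mu$.

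The paper's heavier route buys more. Approximating by measures on SFTs that avoid a neighbourhood of $c$ preserves entropy, which yields the pressure equality of Theorem~\ref{THM:variational-pressure-equality} itself. Together with Theorem~\ref{THM:pressure-variations}, it also gives the $\beta(c)$ half of Corollary~\ref{CORO:alpha-beta-version}. Your weak-$*$ approximation cannot deliver that half, because the approximants may come close to $c$ and make $\int\psi_c$ drop, which is the wrong direction for a supremum.
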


By Theorem~\ref{THM:lower-endpoint} this means that for every $t<0$, the points $c$ with $p_c(t) = \infty$ and the points $c$ with $p_c(t) < \infty$ are both dense in $\mathbb{T}$. As a result the function $c \mapsto p_c(t)$ is highly discontinuous.
That being said, we prove in Section~\ref{Section:lower}, that the map $c\mapsto p_c(t)$ is lower semicontinuous for all $t\in \mathbb{R}$.
When only non-negative $t$'s are considered one can show even more. In particular, for $t\geqslant 0$ we prove that the pressure function depends continuously on the singularity $c$. We summarize our main result in the following theorem.

\begin{theorem}
\label{THM:main}
For $t\in \mathbb{R}$, let $q_t \colon \mathbb{T}\to (-\infty, +\infty], \; c\mapsto p_c(t)$.
\begin{enumerate}
\item For every $t \in \mathbb{R}$, the map $q_t$ is lower semicontinuous. 
\item For $t<0$ the continuity points of $q_t$ are given by $\mc C_\infty$. In particular, the continuity points form a residual set of $0$ Hausdorff dimension.
\item For $t\geqslant 0$, the map $q_t$ is a continuous function on the torus.
\end{enumerate}
\end{theorem}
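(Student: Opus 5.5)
The plan is to handle the three parts in order, using Proposition~\ref{PROP:infinite-pressure} and Theorem~\ref{THM:lower-endpoint} for part (2).

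For part (1), fix $c_0$ and a value $s<p_{c_0}(t)$. By definition of $p_{c_0}(t)$ there is $\mu\in\mathcal M_{c_0}$ with $h(\mu)+t\int\psi_{c_0}\dd\mu>s$. Approximating by ergodic components, we may take $\mu$ ergodic; we may even take $\mu$ supported on a compact $T$-invariant set $K$ with $c_0\notin K$, since $\operatorname{supp}\mu$ is closed and avoids $c_0$. Set $\delta=\operatorname{dist}(c_0,K)>0$. For $|c-c_0|<\delta/2$ we have $\mu\in\mathcal M_c$. On $K$ the functions $\psi_c$ converge uniformly to $\psi_{c_0}$ as $c\to c_0$, because $|x-c|\geqslant\delta/2$ there and $\psi_0$ is smooth away from $0$. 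Hence $p_c(t)\geqslant h(\mu)+t\int\psi_c\dd\mu>s$ for $c$ close to $c_0$, which gives lower semicontinuity. The argument does not depend on the sign of $t$ and allows the value $+\infty$.

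For part (2), let $t<0$. By Proposition~\ref{PROP:infinite-pressure}, $q_t=\infty$ exactly on $\mathcal C_\infty$. At a point $c\in\mathcal C_\infty$, lower semicontinuity together with $q_t(c)=\infty$ forces $q_t(c')\to\infty$ as $c'\to c$, so $c$ is a continuity point in the extended sense. At $c\notin\mathcal C_\infty$ the value $q_t(c)$ is finite, while $\mathcal C_\infty$ is dense by Theorem~\ref{THM:lower-endpoint} (residual sets in $\mathbb T$ are dense by Baire). So there are $c_n\to c$ with $q_t(c_n)=\infty$, and $c$ is a discontinuity point. The residuality and zero Hausdorff dimension statements are then exactly Theorem~\ref{THM:lower-endpoint}.

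For part (3), let $t\geqslant0$. Since $\psi_c\leqslant0$, we get $p_c(t)\leqslant\log 2<\infty$, and it remains to prove upper semicontinuity. This is the main obstacle. My first attempt would be to show that $p_c(t)=\mathcal P_{\mathrm{var}}(t\psi_c)$, the supremum over all of $\mathcal M$ with $\int\psi_c\dd\mu\in[-\infty,0]$. Restricting to $\mathcal M_c$ loses nothing for $t\geqslant0$: a measure charging $c$ has $\int\psi_c\dd\mu=-\infty$ unless $\mu(\{c\})=0$, and then it can be approximated by measures avoiding $c$ (for example periodic or Markov measures on subshifts avoiding a small neighbourhood of $c$) with $h+t\int\psi_c$ nearly preserved. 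Once this is established, I would use that $(\mu,c)\mapsto h(\mu)+t\int\psi_c\dd\mu$ is upper semicontinuous on $\mathcal M\times\mathbb T$ for $t\geqslant0$. Entropy is upper semicontinuous for the doubling map, which is expansive. The map $(x,c)\mapsto\psi_0(x-c)$ is upper semicontinuous with values in $[-\infty,0]$, so it is a decreasing limit of the continuous functions $\max(\psi_0(x-c),-M)$, and the integral is therefore jointly upper semicontinuous. The supremum over the compact space $\mathcal M$ of a jointly upper semicontinuous function is upper semicontinuous in $c$. Combining this with part (1) gives continuity. The delicate point is the approximation step, which justifies replacing $\mathcal M_c$ by $\mathcal M$. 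If it proves hard, I would instead use the known equality of $p_c(t)$ with the topological pressure for $t\geqslant0$ from \cite{FSS22}.
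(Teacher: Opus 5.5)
Your proposal is correct and follows essentially the paper's route. Part (1) is the same near-maximiser argument, using uniform convergence of $\psi_c$ to $\psi_{c_0}$ away from the singularity. Part (2) is the same density argument via Proposition~\ref{PROP:infinite-pressure}. In part (3), the joint upper semicontinuity of $(\mu,c)\mapsto h(\mu)+t\int\psi_c\dd\mu$ on $\mc M\times\mathbb{T}$ is exactly what the paper's weak-limit and truncation argument establishes along sequences $c_m\to c$. The step you flag as delicate, replacing $\mc M_c$ by $\mc M$ for $t\geqslant 0$, is precisely Theorem~\ref{THM:pressure-variations}, which the paper cites from \cite{GKS} (not \cite{FSS22}) rather than proving it; your approximation sketch is not a proof of it, so citing it as in your fallback is the right move.
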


This should be compared with the following result of Gohlke, Kesseb\"ohmer and Schindler \cite{GKS} where they prove a stronger statement about the particular case $t=2$. 

\begin{theorem}[{\cite[Thm.~2.7]{GKS}}]
The function $q_2 \colon \mathbb{T}\to (-\infty, +\infty], \; c\mapsto p_c(2)$ is real analytic.
\end{theorem}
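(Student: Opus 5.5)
The plan is to use a finite-dimensional transfer operator. For $t=2$ the weight $\me^{2\psi_c} = \sin^4(\pi(x-c))$ is a trigonometric polynomial in $x$ with frequencies in $\{-2,\dots,2\}$. Its coefficients are themselves trigonometric polynomials in $c$. Consider the Ruelle transfer operator
\[
(\mathcal L_c f)(x) = \sum_{T y = x} \sin^4(\pi(y-c))\, f(y)
\]
for the doubling map. Writing $e_k(x)=\me^{2\pi\im k x}$, the product $\sin^4(\pi(y-c))\,e_k(y)$ only contains frequencies $k-2,\dots,k+2$. Summing over the two preimages $y=x/2$ and $y=(x+1)/2$ kills the odd frequencies and sends each even frequency $j$ to $e_{j/2}(x)$. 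Hence $\mathcal L_c$ maps the space $V$ spanned by $e_{-2},\dots,e_2$ into itself. Its restriction is a $5\times 5$ matrix $A(c)$ whose entries are real-analytic (indeed trigonometric polynomial) functions of $c$.

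Next I would identify $p_c(2)$ with $\log \lambda(c)$, where $\lambda(c)$ is the spectral radius of $A(c)$. Since $V$ contains the constant function $1$ and $\mathcal L_c$ is a positive operator, we get $\|\mathcal L_c^n 1\|_\infty^{1/n}\to\lambda(c)$. I expect the limit to be exactly the spectral radius on $V$ rather than just bounded by it. The reason is that $\mathcal L_c^n 1$ stays in $V$, and a Perron--Frobenius argument for the positive cone of nonnegative trigonometric polynomials in $V$ gives a nonnegative eigenfunction $h_c\in V$ for $\lambda(c)$. On the other hand, $\log\|\mathcal L_c^n 1\|_\infty^{1/n}$ tends to the topological pressure of $2\psi_c$. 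The potential $2\psi_c$ is upper semicontinuous and bounded above, and it is continuous away from its single zero of the weight. So for $t\ge 0$ the variational principle with all invariant measures applies, e.g.\ via Keller's framework. It then remains to check that restricting to $\mathcal M_c$ does not change the supremum. Measures charging a neighbourhood of $c$ pay a large negative integral, and one can approximate by measures avoiding $c$ (e.g.\ periodic-orbit or Markov approximations) with entropy and integral converging.

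Finally, I would show that $\lambda(c)$ is a simple, isolated eigenvalue of $A(c)$ for every $c$. Then analytic perturbation theory, via the implicit function theorem applied to the characteristic polynomial $\det(A(c)-\lambda)$ at a simple root, gives real analyticity of $c\mapsto\lambda(c)$. Since $\lambda(c)>0$, the map $q_2=\log\lambda$ is real analytic. Simplicity should follow from the spectral gap of $\mathcal L_c$ on a Hölder space, which holds because the weight is a $g$-function up to the normalisation $\lambda$ and has a unique equilibrium state. It can also be obtained from positivity: $h_c>0$ off a finite set, and a second eigenvector of the same modulus would produce a second equilibrium measure.

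The main obstacle is the second step: rigorously identifying the restricted variational pressure $p_c(2)$ with $\log\lambda(c)$. This is delicate because of the zero of the weight at $c$, and because $c$ may be periodic, so that measures supported near $c$ must be handled. I would try first to prove $\log\lambda(c)\le p_c(2)$ by constructing, for each $\varepsilon>0$, compactly supported Markov measures on a subshift avoiding a small neighbourhood of $c$ whose free energy is within $\varepsilon$ of $\log\lambda(c)$. The reverse inequality $p_c(2)\le\log\lambda(c)$ follows from the standard bound $h_\mu+\int 2\psi_c\dd\mu\le\log\lambda$ obtained from the eigenfunction $h_c$.
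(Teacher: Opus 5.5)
The paper does not prove this statement; it quotes it from \cite{GKS}. The authors note that the argument there rests on reading $q_2(c)$, via \eqref{EQ:lq-pressure} at $q=2$, as the correlation dimension of the generalised Thue--Morse measure $\mu_c$, which ties it to $t=2$. You work with the potential instead. Since $\me^{2\psi_c}=\sin^4(\pi(\cdot-c))$ is a trigonometric polynomial, $V=\operatorname{span}\{e_{-2},\dots,e_2\}$ is $\mathcal{L}_c$-invariant, and $q_2(c)$ becomes the logarithm of the Perron root of a $5\times5$ matrix with trigonometric-polynomial entries.

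Step 1 is correct. In fact $W=\operatorname{span}\{e_{-1},e_0,e_1\}$ is itself invariant, $\mathcal{L}_c$ acts on $V/W$ with eigenvalues $\tfrac18\me^{\pm4\pi\im c}$, and on $W$ the characteristic polynomial is $\lambda^3-(\tfrac34-\cos2\pi c)\lambda^2+(\tfrac14-\tfrac58\cos2\pi c)\lambda-\tfrac18$. At $c=\tfrac12$ its roots are $1,\tfrac12,\tfrac14$, matching $p_{1/2}(2)=0$. What your route buys is that the reduction is not special to $t=2$: $\sin^{2m}(\pi(\cdot-c))$ leaves $\operatorname{span}\{e_{-m},\dots,e_m\}$ invariant for every $m\in\N$.

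The genuine gap is step 3, which is where analyticity comes from. The implicit function theorem needs $\lambda(c)$ to be an \emph{algebraically} simple root of $\det(A(c)-\lambda)$ for every $c$, and neither of your justifications gives this.
\begin{itemize}
\item The Ruelle--Perron--Frobenius spectral gap is proved for H\"older potentials, i.e.\ weights bounded away from $0$, but $\sin^4(\pi(\cdot-c))$ vanishes at $c$.
\item Uniqueness of the equilibrium state of $2\psi_c$ is not available; the paper uses Ledrappier only for $\psi_c$, i.e.\ $t=1$. Uniqueness would not yield a gap anyway.
\item The ``second equilibrium measure'' argument can at best exclude a second independent eigenvector for $\lambda(c)$, and even that presupposes a fully supported eigenmeasure of $\mathcal{L}_c^*$. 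It says nothing about a Jordan block, which is exactly what would make the root multiple.
\end{itemize}
Step 2 has the same weak spot. The bound $h(\mu)+2\int\psi_c\dd\mu\leqslant\log\lambda(c)$ for all $\mu$ requires $\log h_c$ to be bounded, so that $\int(\log h_c-\log h_c\circ T)\dd\mu=0$; positivity off a finite set is not enough.

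The missing ingredient is strict positivity of $h_c$, which a zero-counting argument supplies. Let $0\neq h\in V$ be nonnegative with $\mathcal{L}_ch=\lambda h$, and let $Z$ be its finite zero set. Then $h(z)=0$ forces $h=0$ on $T^{-1}z\setminus\{c\}$, so $T^{-1}Z\setminus\{c\}\subset Z$. Since $|T^{-1}Z|=2|Z|$, this gives $|Z|\leqslant 1$. If $|Z|=1$, then $Z=\{0\}$ and $T^{-1}0=\{0,c\}$, i.e.\ $c=\tfrac12$; there one checks directly that the eigenfunction is $4+2\cos(2\pi x)>0$.

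With $h_c$ in the interior of the cone, the Perron--Frobenius arguments go through.
\begin{itemize}
\item Geometric simplicity: a real eigenvector $v$ independent of $h_c$ (replace $v$ by $-v$ if needed) gives the nonnegative eigenvector $h_c-sv$ with $s=\min_{\{v>0\}}h_c/v$. This has a zero, which is impossible.
\item No Jordan block: if $(A(c)-\lambda)v=h_c$, then $A(c)^nv=\lambda^nv+n\lambda^{n-1}h_c$. But $|v|\leqslant Kh_c$ and positivity give $|A(c)^nv|\leqslant K\lambda^nh_c$, a contradiction for large $n$.
\end{itemize}
No peripheral gap is needed. The spectral radius is always a root (Krein--Rutman) and is continuous in $c$, so near each $c_0$ it equals the analytic branch through the simple root $\lambda(c_0)$.

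For $\log\lambda(c)\leqslant p_c(2)$ you do not need hand-built Markov measures. $\mathcal{L}_c$ is dominated by the transfer operator with Lipschitz weight $\me^{2\psi_c^N}$, $\psi_c^N=\max\{\psi_c,-N\}$, so $\log\lambda(c)\leqslant\mathcal{P}(2\psi_c^N)$ for all $N$. Letting $N\to\infty$ (upper semicontinuity of entropy, monotone convergence) and applying Theorem~\ref{THM:pressure-variations} gives the claim. The reverse inequality is the $g$-function bound for $g=\me^{2\psi_c}h_c/(\lambda(c)\,h_c\circ T)$.
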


We emphasize that this result relies heavily on the interpretation of $q_2(c)$ as the correlation dimension of the measure $\mu_c$ and that the methods are not applicable to other values of $t$.
It remains an open question if the function $q_t$ is more regular than continuous for general $t>0$.

\begin{remark}
For every $c \in \mathbb{T}$, the function $t \mapsto p_c(t)$ is monotonously decreasing and convex. Monotonicity comes simply from the fact that $\psi_c$ is a non-positive function for all $c$. Convexity is a very general property that follows from the definition of the pressure as a supremum over affine functions. Convexity already gives  continuity when we restrict to $\mathbb{R}_+$. 
\end{remark}

In fact, as we  shall see in Section~\ref{Section:Birkhoff}, $t \mapsto p_c(t)$ is Lipschitz continuous on $\mathbb{R}_+$, uniformly in $c$. This will be a stepping stone for proving a result on the parameter continuity of the Birkhoff spectrum in Thoerem~\ref{THM:Birkhoff-spectrum-continuity} below. In conjunction with Theorem~\ref{THM:main} it also gives rise to the following observation.

\begin{coro}
\label{CORO:joint-continuity}
The map $(c,t) \mapsto p_c(t)$ is continuous on $\mathbb{T} \times [0,\infty)$.
\end{coro}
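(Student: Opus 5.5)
We prove joint continuity by a standard $\varepsilon/3$ argument. It combines separate continuity in $c$ (Theorem~\ref{THM:main}(3)) with a Lipschitz bound in $t$ that is uniform in $c$. The latter is the fact announced before the corollary, to be established in Section~\ref{Section:Birkhoff}. For orientation, its source is the following. For $\mu \in \mc M_c$ and $t,s \geqslant 0$,
\[
h(\mu) + t\int \psi_c \dd\mu - \Bigl(h(\mu) + s\int\psi_c\dd\mu\Bigr) = (t-s)\int\psi_c\dd\mu .
\]
Only measures with $\int \psi_c \dd \mu \geqslant -K$ matter for the supremum. Indeed, since $h(\mu)\leqslant \log 2$ and $p_c(t) \geqslant p_c(t')$ for $t\leqslant t'$, on a compact range $t\in[0,T_0]$ a measure with very negative $\int\psi_c\dd\mu$ contributes less than any reference measure. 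A convenient reference is a periodic orbit measure avoiding $c$ on which $\psi_c \geqslant \log(\sin^2(\pi/6))$, say. Alternatively one can argue directly from convexity plus monotonicity: $t\mapsto p_c(t)$ is convex, decreasing, finite on $[0,\infty)$, and bounded by $p_c(0)=\log 2$ and below uniformly. Its slopes on $[0,T_0]$ are then bounded below by $(p_c(1)-p_c(0))/1$ at worst, which is uniformly bounded once $p_c(1)$ is bounded below uniformly in $c$.

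Concretely, I would take as given: there is $L$, possibly depending on a compact range $[0,T_0]$, with
\[
|p_c(t)-p_c(s)| \leqslant L|t-s| \quad \text{for all } c\in\mathbb{T},\ t,s\in[0,T_0].
\]
If this is only available from convexity, I would verify it as follows. Pick a fixed point or periodic orbit of period $2$ or $3$ whose orbit stays at distance at least $1/10$ from $c$. Such an orbit exists since the three orbits $\{0\}$, $\{1/3,2/3\}$ and $\{1/7,2/7,4/7\}$ are pairwise far apart, so at least one of them stays away from any given $c$. This gives $p_c(t) \geqslant -t\,C_0$ with $C_0$ independent of $c$. Convexity then bounds the left derivative at $T_0+1$ from below and hence the slopes on $[0,T_0]$.

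The joint continuity argument is then: fix $(c_0,t_0)$ with $t_0\geqslant 0$, choose $T_0>t_0+1$ and $\varepsilon>0$. For $(c,t)$ close to $(c_0,t_0)$ with $t\geqslant 0$,
\[
|p_c(t)-p_{c_0}(t_0)| \leqslant |p_c(t)-p_c(t_0)| + |p_c(t_0)-p_{c_0}(t_0)| \leqslant L|t-t_0| + |q_{t_0}(c)-q_{t_0}(c_0)|.
\]
The first term is small by uniform Lipschitz continuity. The second is small by Theorem~\ref{THM:main}(3), which also gives finiteness of all values involved. This completes the argument. The only real obstacle is the uniformity of the Lipschitz constant in $c$, which is handled by the uniform lower bound $p_c(t)\geqslant -C_0 t$ together with convexity and $p_c(0)=\log 2$.
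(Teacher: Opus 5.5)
Your decomposition $|p_c(t)-p_{c_0}(t_0)|\leqslant|p_c(t)-p_c(t_0)|+|p_c(t_0)-p_{c_0}(t_0)|$ is exactly the paper's. If you simply cite Lemma~\ref{LEM:Lipshitz-control} (for every $c$, all slopes of $p_c$ on $\mathbb{R}_+$ lie in $[-2\log 2,0]$) together with Proposition~\ref{PROP:p-continuity}, the proof is complete.

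The problem is your own verification of that uniform Lipschitz bound, which you call the only real obstacle: it uses convexity in the wrong direction. Secant slopes of a convex function increase to the right, so on $[0,T_0]$ the most negative slope is the right derivative at $0$. The left derivative at $T_0+1$ dominates all slopes on $[0,T_0]$, so a lower bound for it says nothing about how negative those slopes can be. Likewise, $p_c(1)-p_c(0)$ bounds from below only secant slopes over intervals whose right endpoint is $\geqslant 1$.

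In fact, convexity, monotonicity, $p_c(0)=\log 2$ and $p_c(t)\geqslant -C_0t$ are all satisfied by $g(t)=\max\{\log 2-Kt,\,-C_0t\}$ for every $K>C_0$, and $g$ has slope $-K$ near $0$. So these facts cannot give a constant uniform in $c$. The root cause is that a zero-entropy periodic-orbit measure gives a reference line with value $0$, not $\log 2$, at $t=0$. Your ingredients do give a uniform bound away from $0$: for $t_0/2\leqslant s<t$, $\frac{p_c(t)-p_c(s)}{t-s}\geqslant\frac{p_c(s)-p_c(0)}{s}\geqslant -C_0-\frac{2\log 2}{t_0}$. This yields joint continuity at all $(c_0,t_0)$ with $t_0>0$; your argument breaks exactly at $t_0=0$.

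The paper repairs this with a full-entropy reference measure. Lebesgue measure $\lambda$ satisfies $h(\lambda)=\log 2$ and $\int\psi_c\dd\lambda=-2\log 2$ for every $c$. Hence $p_c(t)\geqslant\log 2\,(1-2t)$ with equality at $t=0$, and convexity gives $p_c(x)-p_c(y)\leqslant 2\log 2\,(y-x)$ for $0\leqslant x<y$. Since $\lambda\notin\mc M_c$, this step genuinely needs Theorem~\ref{THM:pressure-variation-full-t} (for $t\geqslant 0$ the supremum over $\mc M_c$ equals the one over $\mc M$).

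Alternatively, you can avoid uniform Lipschitz control altogether. Each $p_c$ is nonincreasing, so $p_c(t_0+\delta)\leqslant p_c(t)\leqslant p_c(\max\{t_0-\delta,0\})$ whenever $|t-t_0|<\delta$. Apply Proposition~\ref{PROP:p-continuity} at these two fixed times, then let $\delta\downarrow 0$ using continuity of the single function $p_{c_0}$. At $t=0$ that continuity holds because $\liminf_{t\downarrow 0}p_{c_0}(t)\geqslant h(\mu)$ for each $\mu\in\mc M_{c_0}$. This gives joint continuity directly.
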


In addition to the pressure itself, it is natural to study how the domain of the Legendre transform $p_c^*$, 
$(\alpha(c),\beta(c))$, depends on $c$.
The behaviour of these endpoints reflects how the singularity at $c$ affects
the range of possible Birkhoff averages.
\begin{theorem}
\label{THM:alpha-beta-behaviour}
The function $c \mapsto \beta(c)$ is continuous on $\mathbb{T}$. On the other hand, $c \mapsto \alpha(c)$ is upper semicontinuous and the continuity points of this map are given by $\mc C_{\infty}$.
\end{theorem}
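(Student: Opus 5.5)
For $\beta$, the plan is to use the compactness of $\mc M$ in the weak-$*$ topology together with the upper semicontinuity of $\psi_c$. Because $\psi_c$ is upper semicontinuous and bounded above by $0$, the map $\mu\mapsto\int\psi_c\dd\mu$ is upper semicontinuous on $\mc M$, so the supremum defining $\beta(c)$ is attained. Moreover $\beta(c)=\lim_{t\to\infty}p_c(t)/t$, but a more direct argument is better. For upper semicontinuity of $\beta$ in $c$, take $c_n\to c$ and measures $\mu_n$ with $\int\psi_{c_n}\dd\mu_n=\beta(c_n)$, and pass to a weak limit $\mu$. Since $\psi_{c}(x)=\psi_0(x-c)$ and $\psi_0\le\max(\psi_0,-M)$, which is continuous, we get $\limsup\int\psi_{c_n}\dd\mu_n\le\int\max(\psi_c,-M)\dd\mu$ for every $M$, and hence $\limsup\beta(c_n)\le\beta(c)$. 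For lower semicontinuity, approximate the maximizing measure for $\beta(c)$. If $\mu(\{c\})$ could be positive, $\beta(c)$ would be $-\infty$; but $\beta(c)\ge\int\psi_c\dd\delta_0=\psi_c(0)>-\infty$ unless $c=0$, and for $c=0$ one uses $\delta_{1/3}$ on the period-2 orbit. So by ergodic decomposition and density of periodic measures it suffices to approximate $\beta(c)$ by periodic orbit measures $\nu$ avoiding $c$. This rests on a standard fact: for an upper semicontinuous potential and a measure with finite integral, we can approximate in integral by measures on periodic orbits that stay away from the singularity (e.g.\ Markov approximations, with the truncation controlled by $\int\psi_c\dd\mu>-\infty$). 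For such a $\nu$, $c'\mapsto\int\psi_{c'}\dd\nu$ is continuous near $c$, and this gives $\liminf\beta(c_n)\ge\beta(c)-\varepsilon$.

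For $\alpha$, upper semicontinuity is the easy direction. Given $c$ and $\varepsilon$, I will choose $\mu$ with $\int\psi_c\dd\mu<\alpha(c)+\varepsilon$, or below $-K$ if $\alpha(c)=-\infty$, and then pass to a periodic orbit measure $\nu$ not containing $c$ in its support with almost the same integral. This uses the same approximation as above: the infimum over $\mc M$ can be approximated by periodic measures avoiding $c$, since pushing a periodic orbit slightly off $c$ only increases its integral by a controlled amount, or we simply use that $\int\psi_c\dd\mu$ finite forces $\mu(\{c\})=0$. Then $c'\mapsto\int\psi_{c'}\dd\nu$ is continuous at $c$, so $\alpha(c')\le\int\psi_{c'}\dd\nu<\alpha(c)+2\varepsilon$ for $c'$ near $c$. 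This is exactly upper semicontinuity, with values in $[-\infty,0]$. It follows at once that every $c\in\mc C_\infty$ is a continuity point, since $\limsup\alpha(c')\le\alpha(c)=-\infty$.

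The remaining step, which I expect to be the main obstacle, is to show that every $c\notin\mc C_\infty$ is a discontinuity point. By Theorem~\ref{THM:lower-endpoint}, $\mc C_\infty$ is dense, so there are $c_n\to c$ with $\alpha(c_n)=-\infty$. Then $\liminf_{c'\to c}\alpha(c')=-\infty<\alpha(c)$, and $\alpha$ is discontinuous at $c$. So the density statement of Theorem~\ref{THM:lower-endpoint} carries the whole argument here, and the real technical work lies in the periodic approximation lemma used in the first two paragraphs, which I would prove by taking a generic point of an ergodic component and closing up its orbit via the specification property of the doubling map, while keeping the closed orbit at a definite distance from $c$.
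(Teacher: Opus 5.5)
Your treatment of $\alpha$ and of the upper semicontinuity of $\beta$ is sound.

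Your weak-limit-plus-truncation argument for $\limsup_n\beta(c_n)\le\beta(c)$ is correct, and it is more direct than the paper's route. The paper writes $\beta(c)=\inf_{t>0}t^{-1}p_c(t)$ and uses continuity of $c\mapsto p_c(t)$, whose proof needs Theorem~\ref{THM:pressure-variations}. Since $\beta$ is defined as a supremum over all of $\mathcal M$, your version needs no restriction to $\mathcal M_c$.

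For $\alpha$, the reduction $\alpha(c)=\inf_{\mu\in\mathcal M_c}\int\psi_c\dd\mu$ is indeed easy, but not for the reasons you give: $\mu(\{c\})=0$ does not remove $c$ from the support of $\mu$. The right argument is that periodic orbit measures avoiding $c$ are weak-$*$ dense in $\mathcal M$. At most one periodic orbit contains $c$, and its measure is a limit of other periodic orbit measures. The upper semicontinuity of $\nu\mapsto\int\psi_c\dd\nu$, which you noted yourself, then gives $\limsup_k\int\psi_c\dd\nu_k\le\int\psi_c\dd\mu$ along such approximants. That is exactly the inequality an infimum requires. Your discontinuity argument for $\alpha$ off $\mathcal C_\infty$, via density of $\mathcal C_\infty$, is the paper's.

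The genuine gap is the lower semicontinuity of $\beta$, which requires $\beta(c)=\sup_{\mu\in\mathcal M_c}\int\psi_c\dd\mu$. Here upper semicontinuity works against you. If a maximizing $\mu$ has $c$ in its support, weak approximants in $\mathcal M_c$ may put mass much closer to $c$, and nothing bounds their integrals from below.

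The ``standard fact'' you invoke is false for general upper semicontinuous potentials. Let $K$ be a Sturmian subshift and set $\phi=0$ on $K$, $\phi=-1$ off $K$. Then $\sup_{\mathcal M}\int\phi\dd\mu=0$, while every periodic measure gives $-1$. So any proof must exploit the specific shape of $\psi_c$, and your specification sketch does not.

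When $c\in\operatorname{supp}\mu$, a $\mu$-generic orbit enters every neighbourhood of $c$ with positive frequency. Closing up a long orbit segment therefore does not keep the periodic orbit at a definite distance from $c$. You would have to modify the orbit at every near-$c$ excursion. Since $\psi_c$ has no bounded distortion near $c$, the usual estimates do not control the effect of these modifications on the preceding orbit points, which agree only up to an error of order $2^{-(j-i)}$. This is worst when $c$ is periodic and the excursions cluster.

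That is the real content of the step, and the paper does not prove it by hand. Corollary~\ref{CORO:alpha-beta-version} derives it from the identity $\sup_{\mathcal M_c}\bigl(h(\mu)+t\int\psi_c\dd\mu\bigr)=\sup_{\mathcal M}\bigl(h(\mu)+t\int\psi_c\dd\mu\bigr)$ for $t\ge 0$ (Theorem~\ref{THM:pressure-variations}, imported from the literature). It divides by $t$ and lets $t\to\infty$, using $0\le h\le\log 2$. You need either this input or an actual $\psi_c$-specific surgery argument; as written, $\liminf_{c'\to c}\beta(c')\ge\beta(c)$ is not established.
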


To this end, we emphasize that the natural variational principle appearing in \cite{FSS22}, by considering the modified variational pressure function, in fact coincides with the classical pressure function. This not only is useful as a proving tool, but it also offers a bridge between the intuitive definition of the pressure function in this regime and the established definition of the pressure function.
The following result is complementing the results in \cite[Thm.~2.3]{GKS}, where the same statement is proven for $t\geqslant0$.
\begin{theorem} \label{THM:variational-pressure-equality}
For all $c \in \mathbb{T}$ and $t< 0$, we have that
\[
\sup_{\mu \in \mc M_c} h(\mu) + \int t \psi_c \dd \mu 
= \sup_{\mu \in \mc M} h(\mu) + \int t \psi_c \dd \mu.
\]
\end{theorem}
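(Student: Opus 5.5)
Since $\mc M_c \subseteq \mc M$, the left-hand side is trivially at most the right-hand side, so only the reverse inequality needs an argument. Note that for $t<0$ the potential $t\psi_c = -2t\log|\sin(\pi(x-c))|^{-1}\cdot(-1)$ is nonnegative and has a $+\infty$ singularity at $c$, and $\int t\psi_c \dd\mu$ is well defined in $[0,\infty]$ for every $\mu\in\mc M$. I would fix $\mu \in \mc M$ with $c \in \operatorname{supp}\mu$ and show that $h(\mu)+\int t\psi_c\dd\mu$ can be approximated (or exceeded) by values from $\mc M_c$. I would split into two cases according to whether the right-hand side is finite.

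\emph{Case $\alpha(c) = -\infty$, i.e.\ $c\in\mc C_\infty$.} Then $\sup_{\mu\in\mc M}\int t\psi_c\dd\mu = +\infty$. I would show that the left-hand side is also infinite, which is essentially Proposition~\ref{PROP:infinite-pressure} in the direction $c\in\mc C_\infty \Rightarrow p_c(t)=\infty$. Concretely, $\alpha(c)=-\infty$ means there are invariant measures with arbitrarily negative $\int\psi_c$. By ergodic decomposition and the lower semicontinuity of $\mu\mapsto\int\psi_c\dd\mu$ (the map $\mu\mapsto\int \max(\psi_c,-N)\dd\mu$ is continuous), one can take ergodic, and then periodic, measures with $\int\psi_c$ very negative. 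Periodic measures supported on orbits that approach $c$ without containing it belong to $\mc M_c$. The plan is to use the dense set of periodic measures and truncate: pick a periodic orbit $O$ avoiding $c$ with $\int\max(\psi_c,-N)\dd\delta_O$ close to $\int\max(\psi_c,-N)\dd\mu$, then let $N\to\infty$.

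\emph{General case.} Here I would use a truncation-plus-approximation argument. Let $\phi_N = \max(t\psi_c\cdot(-1),-N)\cdot(-1) = \min(t\psi_c, N)$, which is continuous. By the classical variational principle for continuous potentials, together with the fact that equilibrium-like measures for $\phi_N$ can be approximated in the weak-$*$ topology and in entropy by measures avoiding a small neighbourhood of $c$, one gets
\[
\sup_{\mc M_c} h+\int t\psi_c \;\geqslant\; \sup_{\mc M_c} h+\int\phi_N \;=\; \sup_{\mc M}h+\int\phi_N .
\]
The equality on the right follows because for a continuous potential over the doubling map, measures supported on subshifts avoiding a neighbourhood of $c$ (for instance, sets of points whose binary expansion avoids a long word coding $c$) have pressure converging to the full pressure: entropy is approximated by Markov subshifts that forbid one long cylinder, and the integral changes little because $\phi_N$ is continuous. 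Two sub-cases need care. If $c$ is periodic or preperiodic, one must additionally avoid the finite orbit; this is still handled by forbidding one cylinder containing $c$. If $c$ has a dyadic expansion, both expansions need to be forbidden. Letting $N\to\infty$ and applying monotone convergence for each fixed $\mu\in\mc M$ gives $\sup_{\mc M}h+\int\phi_N\to\sup_{\mc M}h+\int t\psi_c$, which completes the argument.

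\emph{Main obstacle.} The step I expect to be delicate is the claim that restricting to measures that avoid the cylinder $[w]$ around $c$ loses only a little of $\sup_{\mc M}(h+\int\phi_N)$, uniformly enough in the length of $w$. I would try to prove it by taking a near-optimal ergodic $\mu$ for $\phi_N$ and a typical point. Approximating $\mu$ by a Bernoulli-like measure on a horseshoe built from long $\mu$-typical words yields entropy and integral close to those of $\mu$. Any occurrence of $w$ can then be removed by inserting short spacer words, at a cost that vanishes as $|w|\to\infty$. Alternatively, when $\mu(\{c\})$-type concentration is the issue, a direct estimate shows that the measure of $[w]$ tends to $0$, so deleting these occurrences costs little entropy and integral.
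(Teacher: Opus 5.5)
Your proposal is correct and follows essentially the paper's route: truncate to the Lipschitz potential $\min(t\psi_c,N)=t\max\{\psi_c,-N/|t|\}$, use that for such a regular potential the pressure is already attained by measures supported on subshifts forbidding the words coding $c$ (the paper cites Ferguson--Pollicott here, whose argument uses a short spacer word not occurring in the forbidden words, which is what you sketch), and pass to the limit via $\min(t\psi_c,N)\leqslant t\psi_c$ and monotone convergence. Your separate case $\alpha(c)=-\infty$ is redundant, since the monotone-limit argument already covers an infinite right-hand side; also, the property of $\mu\mapsto\int\psi_c\dd\mu$ you actually use there is upper, not lower, semicontinuity.
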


The results in Theorem~\ref{THM:main} have some interesting implication for the Birkhoff spectrum and the $L^q$ spectrum. In particular, the regularity properties of the pressure function given in Theorem~\ref{THM:main}, extend to the two spectra through their 
connection with the pressure function established in Theorem~\ref{THM:Birkhoff-spectrum} and in \cite[Thm.~2.6]{GKS} respectively.

\begin{theorem}
\label{THM:Birkhoff-spectrum-continuity}
Let $\beta \in [-2\log(2), \beta(c_0))$ or $\beta > \beta(c_0)$ for some $c_0 \in \mathbb{T} \setminus \{ 1/2\}$. Then, $c \mapsto f_{\psi_c}(\beta)$ is continuous at the point $c_0$.
\end{theorem}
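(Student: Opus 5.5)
We argue separately on the two ranges of $\beta$, using Theorem~\ref{THM:Birkhoff-spectrum} to reduce everything to the behaviour of the Legendre transform $p_c^*(\beta)=\inf_t\{p_c(t)-t\beta\}$.

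\emph{Case $\beta>\beta(c_0)$.} By the continuity of $c\mapsto\beta(c)$ in Theorem~\ref{THM:alpha-beta-behaviour}, we have $\beta>\beta(c)$ for all $c$ in a neighbourhood of $c_0$ (with $c\neq 1/2$). There $f_{\psi_c}(\beta)=0$ by Theorem~\ref{THM:Birkhoff-spectrum}, so the map is locally constant and hence continuous at $c_0$.

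\emph{Case $\beta\in[-2\log 2,\beta(c_0))$.} First we check that $\beta$ lies in $(\alpha(c),\beta(c))$ for all $c$ near $c_0$. For the upper end this follows again from continuity of $\beta(\cdot)$. For the lower end we use that $\alpha(c)<-2\log 2$ for every $c\neq 1/2$. The natural candidate is $\alpha(c)\le\int\psi_c\dd\lambda$ with Lebesgue measure $\lambda$, since $\int_{\mathbb{T}}2\log|\sin(\pi x)|\dd x=-2\log 2$. Strict inequality should come from a periodic orbit measure, or from the fact that $\lambda$ is not a minimizer: a small perturbation towards measures giving more mass near $c$ lowers the integral. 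This step needs care and is where I expect some work. Once it is done, Theorem~\ref{THM:Birkhoff-spectrum} gives $f_{\psi_c}(\beta)=p_c^*(\beta)/\log 2$ locally, and it suffices to prove continuity of $c\mapsto p_c^*(\beta)$ at $c_0$.

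The main step is to restrict the infimum to $t\ge 0$. Since $\beta\ge-2\log 2=\int\psi_c\dd\lambda$ and $p_c(0)=\log 2$ is attained by $\lambda$, we have, for $t<0$,
\[
p_c(t)-t\beta\ \ge\ \log 2+t\int\psi_c\dd\lambda-t\beta\ \ge\ \log 2 = p_c(0).
\]
Hence
\[
p_c^*(\beta)=\inf_{t\ge0}\{p_c(t)-t\beta\}.
\]
Further, since $\beta<\beta(c)$ and $p_c'(t)\to\beta(c)$ as $t\to\infty$ (the slope at infinity is given by ergodic maximization), the function $t\mapsto p_c(t)-t\beta$ tends to $+\infty$. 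Uniformly for $c$ near $c_0$, using $\beta(c)\ge\beta+\varepsilon$ there and the bound $p_c(t)\ge t\int\psi_c\dd\nu_c$ with $\nu_c$ a near-maximizing measure, we get $p_c(t)-t\beta\ge t\varepsilon/2 - C$. So the infimum can be taken over a compact interval $[0,T]$ with $T$ independent of $c$ near $c_0$.

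On $[0,T]\times U$ the map $(c,t)\mapsto p_c(t)-t\beta$ is jointly continuous by Corollary~\ref{CORO:joint-continuity}, so it is uniformly continuous on $\overline U\times[0,T]$. The infimum over the compact $t$-interval is therefore continuous in $c$, which gives continuity of $p_c^*(\beta)$ at $c_0$. The main obstacles are the uniform cutoff $T$, which needs a locally uniform lower bound on $\beta(c)-\beta$ (with care that near-maximizing measures can be chosen so that $\int\psi_c$ varies controllably), and the strict inequality $\alpha(c)<-2\log 2$ near $c_0$.
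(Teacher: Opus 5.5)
Your argument is essentially the paper's proof. The case $\beta>\beta(c_0)$ is handled identically via continuity of $\beta(\cdot)$. For $\beta\in[-2\log 2,\beta(c_0))$ the paper also reduces the Legendre infimum to a compact interval $[0,\log 2/\Delta]$, uniformly for $c$ near $c_0$ (Lemma~\ref{LEM:Legendre-transform-minimizer}), and then applies the joint continuity of Corollary~\ref{CORO:joint-continuity}. The two steps you flag as obstacles are not real ones.

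\emph{The strict inequality $\alpha(c)<-2\log 2$.} You leave this unproved, but it is never needed.
\begin{itemize}
\item For $\beta>-2\log 2$, the trivial bound $\alpha(c)\leqslant\int\psi_c\dd\lambda=-2\log 2<\beta$ already places $\beta$ in $(\alpha(c),\beta(c))$ for all $c$ near $c_0$.
\item At $\beta=-2\log 2$, Birkhoff's theorem for Lebesgue measure gives $f_{\psi_c}(\beta)=1$ for every $c$, which the paper notes before \eqref{EQ:Legendre-maximum}. So continuity at that endpoint is trivial.
\end{itemize}
You can therefore drop the periodic-orbit or perturbation argument you were planning.

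\emph{The uniform cutoff $T$.} Your own estimate already gives this, with $C=0$ and no control on how $\nu_c$ depends on $c$. If $\int\psi_c\dd\nu_c\geqslant\beta(c)-\varepsilon/2\geqslant\beta+\varepsilon/2$, then for $t\geqslant 0$ you have $p_c(t)-t\beta\geqslant h(\nu_c)+t\bigl(\int\psi_c\dd\nu_c-\beta\bigr)\geqslant t\varepsilon/2$. Hence $T=2\log 2/\varepsilon$ works for every $c$ with $\beta(c)\geqslant\beta+\varepsilon$. The paper obtains the same bound from $p_c^{\ast}(\beta+\Delta)\geqslant 0$, and your remark about $p_c'(t)\to\beta(c)$ is unnecessary.

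One point you should make explicit: $\lambda\notin\mc M_c$, and $\nu_c$ may also charge $c$. So the lower bounds $p_c(t)\geqslant p_c^{\lambda}(t)$ and $p_c(t)\geqslant p_c^{\nu_c}(t)$ rely on Theorem~\ref{THM:pressure-variation-full-t}, exactly as in Lemma~\ref{LEM:Lipshitz-control}. Alternatively, pick $\nu_c\in\mc M_c$ via Corollary~\ref{CORO:alpha-beta-version}.
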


We recall at this point that for a $q \in \mathbb{R}$, the $L^q$-spectrum of a measure $\nu$ is given by
\[
\beta_{\nu}(q) := \lim_{n \to \infty} \frac{1}{n \log 2}
\log \left(
\sum_{I \in I_n} \nu(I)^q
\right)
\]
whenever the limit exists,  where $I_n$ is a partition of $\mathbb{T}$ into half-open intervals of length $2^{-n}$ for every $n \in \N$, restricted to the topological support of $\nu$.
From \cite[Thm.~2.6]{GKS}, for any $c \in \mathbb{T}\setminus \{ 1/2\}$, 
\begin{equation}
\label{EQ:lq-pressure}
\beta_{\mu_c}(q) = \frac{p_c(q)}{\log(2)},
\end{equation}
for all $q\in \mathbb{R}$. For $c = 1/2$, this relation no longer holds. It is then a straightforward fact, that the $L^q$ spectrum inherits the regularity properties of the pressure function for $c\neq \frac{1}{2}$. The exceptional case $c=\frac{1}{2}$ needs to be treated separately.

\begin{prop} \label{PROP:Lq-continuity}
For $t\in \mathbb{R}$, let $\ell_t \colon \mathbb{T} \to (-\infty, +\infty], \; c\mapsto \beta_{\mu_c}(t)$. Then,
\begin{enumerate}
\item For every $t \in \mathbb{R}$, the map $\ell_t$ is lower semicontinuous. 
\item For $t<0$ the continuity points of $\ell_t$ are given by $\mc C_\infty$. In particular, the continuity points form a residual set of $0$ Hausdorff dimension.
\item For $t\geqslant 1/2$, the map $\ell_t$ is a continuous function on  $\mathbb{T}$.
\item For $t \in [0,1/2)$, the map $\ell_t$ is discontinuous at $c=\frac{1}{2}$ and it is a continuous function on $\mathbb{T}\setminus\{1/2\}$.
\end{enumerate}
\end{prop}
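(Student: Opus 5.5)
Away from $c=1/2$ everything follows from \eqref{EQ:lq-pressure} and Theorem~\ref{THM:main}. The work is to compute $\beta_{\mu_{1/2}}(t)$ directly and compare it with $\lim_{c\to 1/2} p_c(t)/\log 2 = p_{1/2}(t)/\log 2$, using the continuity of $q_t$ from Theorem~\ref{THM:main}(3) for $t\ge0$.

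\textbf{The value at $c=1/2$.} Since $\mu_{1/2}=\delta_0$ and the sum runs only over intervals meeting the topological support, exactly one interval has mass $1$ at each level $n$. Hence $\sum_{I}\mu_{1/2}(I)^t = 1$ and $\ell_t(1/2)=0$ for every $t\in\mathbb R$.

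\textbf{The pressure at $c=1/2$.} By Theorem~\ref{THM:variational-pressure-equality} and \cite[Thm.~2.3]{GKS}, we may take $p_{1/2}(t)=\sup_{\mu\in\mathcal M} h(\mu)+t\int\psi_{1/2}\dd\mu$. Here $\psi_{1/2}(x)=2\log|\cos(\pi x)|$, so $\delta_0$ gives the value $0$ and $\psi_{1/2}\le 0$. For $t<0$ we get $t\psi_{1/2}\ge 0$, hence $p_{1/2}(t)\ge \log 2>0$. For $t\ge0$, $p_{1/2}$ is convex and decreasing in $t$, with $p_{1/2}(0)=\log 2$. Using that $\mu_{1/2}=\delta_0$ is the equilibrium state of $\psi_{1/2}$ at $t=1$, we have $p_{1/2}(1)=0$, and since $\delta_0$ contributes $0$ for every $t$, $p_{1/2}(t)\ge 0$. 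I expect the slope at $t=1$ to be governed by the Lyapunov-type quantity $\int\psi_{1/2}\dd\delta_0=0$ together with the entropy. The key claim, which I take as the main obstacle, is that the threshold where $p_{1/2}$ reaches $0$ is $t=1/2$, that is, $p_{1/2}(t)>0$ for $t<1/2$ and $p_{1/2}(t)=0$ for $t\ge 1/2$.

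\textbf{Proving the threshold.} For $t>1/2$ the bound $h(\mu)\le -\tfrac12\int\psi_{1/2}\dd\mu$ gives $p_{1/2}(t)\le0$; this is a Ruelle-type inequality, since $|\cos\pi x|^2$ is a $g$-function with two preimage weights summing to $1$. For $t<1/2$ I would exhibit measures with positive free energy, namely the uniform measure on periodic orbits of points close to $0$: Lebesgue-typical behaviour gives $\int\psi_{1/2}\dd\mathrm{Leb}=-2\log2$. At $t=0$ the value is $\log 2$. Convexity and $p_{1/2}(t)=0$ for $t\ge1/2$ then yield $p_{1/2}(t)>0$ on $[0,1/2)$. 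If the threshold turns out to be obtained differently, I would instead directly compute $\lim_{c\to1/2}\beta_{\mu_c}(t)$ via the $L^q$ sums of the Riesz products $\prod_m(1+\cos 2\pi 2^m x)$ near $c=1/2$. Since the statement asserts the threshold is $1/2$, I would aim to show $p_{1/2}(t)>0 \iff t<1/2$.

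\textbf{Assembling the four items.}
\begin{enumerate}
\item For (1), the map equals $q_t/\log2$ on $\mathbb T\setminus\{1/2\}$. At $1/2$ we have $\ell_t(1/2)=0\le \liminf_{c\to 1/2} q_t(c)/\log 2$ because $\liminf_{c\to1/2} q_t(c)\ge q_t(1/2)=p_{1/2}(t)\ge 0$, which gives lower semicontinuity.
\item For (2), note $1/2\notin\mathcal C_\infty$, since $\alpha(1/2)$ is finite (it equals $\psi_{1/2}$'s minimal average, which is finite for $c=1/2$ because $0$ is far from the preimage structure at $1/2$). Also $\ell_t$ is discontinuous at $1/2$ because $\liminf\ge p_{1/2}(t)/\log2\ge1>0$. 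Elsewhere the claim is Theorem~\ref{THM:main}(2).
\item Items (3) and (4) follow from the threshold: continuity at $1/2$ holds iff $p_{1/2}(t)=0$.
\end{enumerate}
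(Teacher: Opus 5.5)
\textbf{Overall.} Your reduction is the same as the paper's. Away from $c=1/2$ everything is inherited from Theorem~\ref{THM:main} via \eqref{EQ:lq-pressure}. At $c=1/2$ one compares $\ell_t(1/2)=0$ with $\liminf_{c\to 1/2}\ell_t(c)\geqslant p_{1/2}(t)/\log 2$, and for $t\geqslant 0$ this is an equality. The paper simply quotes the known formula $p_{1/2}(t)=\max\{1-2t,0\}\log 2$ from \cite{FSS22,GKS}. You try to derive it yourself, and the upper bound, which items (2) and (3) need, is not established.

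\textbf{The gap: your upper bound on $p_{1/2}$.} The inequality $h(\mu)\leqslant -\tfrac12\int\psi_{1/2}\dd\mu$ does not follow from the $g$-function property. The Ruelle-type bound for the $g$-function $\cos^2(\pi x)$ only gives $h(\mu)+\int\psi_{1/2}\dd\mu\leqslant 0$, i.e.\ $p_{1/2}(1)\leqslant 0$. The function $|\cos(\pi x)|$ is not a $g$-function: its preimage weights $|\cos(\pi y/2)|+|\sin(\pi y/2)|$ are $\geqslant 1$, so the inequality points the wrong way. With $p_{1/2}(1)=0$, $p_{1/2}\geqslant 0$, monotonicity and convexity you only get $p_{1/2}(t)=0$ for $t\geqslant 1$, and nothing on $[1/2,1)$. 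The threshold $1/2$ is a special feature of $c=1/2$; generically $p_c(t)>0$ for $t<1$, e.g.\ for $c=0$, since $\beta(0)<0$.

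\textbf{The missing idea.} The special structure is the identity $\cos(\pi x)=\sin(2\pi x)/(2\sin(\pi x))$, i.e.\ $\psi_{1/2}=-2\log 2+\psi_0\circ T-\psi_0$. By Birkhoff's theorem and Poincar\'e recurrence, this gives $\int\psi_{1/2}\dd\mu=-2\log 2$ for every ergodic $\mu\neq\delta_0$. Writing $\mu=w\delta_0+(1-w)\mu'$, you then get
\[
p^{\mu}_{1/2}(t)=(1-w)\bigl(h(\mu')-2t\log 2\bigr)\leqslant\max\{1-2t,0\}\log 2 .
\]

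\textbf{Consequences for items (2) and (4).} The same upper bound is what item (2) needs. Your reason for $1/2\notin\mc C_\infty$ (``$0$ is far from the preimage structure'') is not an argument. The formula instead gives $p_{1/2}(t)=(1-2t)\log 2<\infty$ for $t<0$, so Proposition~\ref{PROP:infinite-pressure} applies. Two smaller points on the positivity step:
\begin{enumerate}
\item Periodic-orbit measures have zero entropy and, if they avoid $0$, free energy $-2t\log 2<0$ for $t>0$, so they cannot give positivity.
\item Convexity gives upper bounds by chords, not lower bounds.
\end{enumerate}
Positivity on $[0,1/2)$, which is all item (4) needs, comes directly from Lebesgue measure: $p_{1/2}(t)\geqslant(1-2t)\log 2$, as in \eqref{EQ:p_c_lower-bound}.

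\textbf{Fix.} Either cite the formula from \cite{FSS22,GKS}, as the paper does, or add the coboundary argument above.
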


This should be compared to the corresponding statement about the family of pressure functions in Theorem~\ref{THM:main}.

\section{Variational pressure}

For our purpose, it is computationally and presentationally convenient to exploit the close relation of the doubling map $T \colon x\mapsto 2x \pmod{1}$ with the full shift space of two letters $\mathbb{X} = \mathcal{A}^{\mathbb{N}}$, with $\mathcal{A}= \{0,1\}$.  Consider the standard metric on $\mathbb{X}$, $d(x,y) = 2^{-k}$ whenever $k$ is the smallest integer with $x_k \neq y_k$ and the shift map $S\colon \mathbb{X} \to \mathbb{X}$ defined by $S(x)_n = x_{n+1}$. Then the system $(\mathbb{X}, S)$ consist of the classical dynamical symbolic representation of the  system $(\mathbb{T}, T)$, through the dyadic expansion of a real number, defining the (factor) map $\pi_2 \colon (x_n)_{n \in \mathbb{N}} \mapsto \sum_{n=1}^{\infty} x_n 2^{-n}$, which topologically semi-conjugates the action of $S$ and $T$. The map $\pi_2$ is $1$-to-$1$ everywhere, except at the dyadic rationals. For every finite word $w \in \{0,1\}^n$ and $n \in \mathbb{N}$, we adopt the following cylinder notation  
$
[w] = \{x \in \mathbb{X} : x_{1} \cdots x_n = w_1 \cdots w_n \}.
$
In what follows, without loss of generality, we can identify $\mathbb{T}$ with the full shift on two symbols $\mathbb{X} = \mathcal{A}^{\mathbb{N}}$, with $\mathcal{A}= \{0,1\}$.

In order to cut out small neighbourhoods around the singularity position $c$, we first identify those cylinders of length $n$ that are close to $c$. More precisely, let 
\[
\mathcal{F}_n = \mathcal{F}_n(c) = \{ w \in \mathcal{A}^n : [w] \cap B_{2^{-(n+1)}}(c) \neq \emptyset \}
\]
where $B_r(b)$ denotes the set of all points with Euclidean distance to $b$ less than $r$.
Note that $\mathcal{F}_n$ consists of either one or two elements. 

Let $\mathbb{X}_n$ be the subshift of finite type (SFT) with forbidden word set $\mathcal{F}_n$. Given a H\"{o}lder continuous function $\psi$ on $\mathbb{X}$, let
\[
\mc P_n(\psi)= \sup_{\mu \in \mathcal{M}(\mathbb{X}_n)} h(\mu) + \int_{\mathbb{X}_n} t \psi_c \dd \mu(x),
\]
where $\mathcal{M}(\mathbb{X}_n)$ denotes the set of shift-invariant probability measures on $\mathbb{X}_n$.

The following can probably be considered folklore. In this formulation, it is a slight adaptation of \cite[Prop.~3.8]{FergusonPollicott}.

\begin{prop}
\label{PROP:SFT-approximation}
For every H\"{o}lder continuous function $\psi$ on $\mathbb{X}$, it is $\lim_{n \to \infty} \mc P_n(\psi) = \mc P(\psi)$.
\end{prop}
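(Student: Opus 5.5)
The strategy is to squeeze $\mc P_n(\psi)$ between two bounds. The upper bound is trivial: $\mathbb{X}_n \subseteq \mathbb{X}$, so $\mathcal{M}(\mathbb{X}_n) \subseteq \mathcal{M}(\mathbb{X})$ and hence $\mc P_n(\psi) \leqslant \mc P(\psi)$ for every $n$. We may also note that $\mathbb{X}_{n+1}\subseteq \mathbb{X}_n$ up to a bounded shift in index. Every word of $\mathcal{F}_{n+1}$ extends a word whose cylinder meets $B_{2^{-(n+1)}}(c)$, so forbidding $\mathcal{F}_{n+1}$ is no weaker than forbidding $\mathcal{F}_{n}$ in an eventual sense. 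Monotonicity is not essential, however. It suffices to show $\liminf_n \mc P_n(\psi) \geqslant \mc P(\psi)$.

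For the lower bound, let $\mu_\psi$ be the unique equilibrium state of the H\"older potential $\psi$ on the full shift. It is a Gibbs measure with full support. Fix $\varepsilon>0$. Given $\mu_\psi$, I approximate it by measures supported on $\mathbb{X}_n$ with free energy close to $\mc P(\psi)$. The cleanest route goes through periodic orbits, or more directly through the \emph{partition function}. For the SFT $\mathbb{X}_n$ (which is topologically mixing for $n$ large enough, since only one or two words of length $n$ are removed), the variational principle gives
\[
\mc P_n(\psi) \geqslant \limsup_{m\to\infty}\frac1m \log \sum_{u\in L_m(\mathbb{X}_n)} \exp\bigl(\sup_{[u]} S_m\psi\bigr) - o(1),
\]
where $L_m(\mathbb{X}_n)$ denotes the admissible words of length $m$ and the $o(1)$ comes from bounded distortion of H\"older $\psi$. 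On the full shift the analogous sum grows like $\me^{m\mc P(\psi)}$, with weights comparable (Gibbs property) to $\mu_\psi([u])$. So it suffices to show that the $\mu_\psi$-mass of words of length $m$ avoiding all $w\in\mathcal{F}_n$ decays at exponential rate at most $\delta_n\to 0$.

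The key estimate is then a large deviation / blocking argument. Cut a word of length $m$ into blocks of length $n$. Construct the admissible words as concatenations of blocks from the set $G_n$ of length-$n$ words that do not contain any forbidden word (nor create one across boundaries). A cheaper route is to use blocks of length $kn$ and insert a fixed connecting word between them. By the Gibbs property, $\mu_\psi$ of the complement of $G_{kn}$ is at most $kn\cdot \max_{w\in\mathcal{F}_n}\mu_\psi([w])\cdot 2 \leqslant C kn\, \me^{-\gamma n}$ for some $\gamma>0$, because cylinders of length $n$ have $\mu_\psi$-mass exponentially small uniformly (Gibbs plus $\psi$ bounded). Choosing $k$ fixed and $n$ large, $G_{kn}$ carries $\mu_\psi$-mass close to $1$. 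Quasi-multiplicativity of $\mu_\psi$ under concatenation, with constants depending only on the H\"older data and on the bounded-length connecting words, then yields
\[
\mc P_n(\psi) \geqslant \frac{1}{kn+r}\log\Bigl(C^{-1}\sum_{u\in G_{kn}}\me^{S_{kn}\psi(u)}\Bigr) \geqslant \mc P(\psi) - \frac{C'}{kn} + \frac{\log(1-Ckn\me^{-\gamma n})}{kn+r},
\]
which tends to $\mc P(\psi)$ as $n\to\infty$. Alternatively, one can follow Ferguson--Pollicott and perturb the transfer operator: the transfer operator restricted to $\mathbb{X}_n$ differs from the full one by a hole of exponentially small equilibrium measure, and the leading eigenvalue depends continuously on it.

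The main obstacle is the concatenation step. Gluing good blocks might create a forbidden word across block boundaries, and the forbidden words depend on $c$, for example if $c$ is dyadic with two-element $\mathcal{F}_n$. I would handle this by inserting a fixed short spacer word between blocks that is admissible in every $\mathbb{X}_n$. For instance, when $c\notin\{0\}$, one can use a long run of the letter whose repetition stays away from $c$. This requires that every forbidden word of length $n$ near $c$ is not a subword of $\text{block}\cdot\text{spacer}\cdot\text{block}$ straddling the junction, which holds once the spacer length exceeds the length of the run at the beginning of $c$'s expansion. This technicality is where care is needed; everything else is standard Gibbs-measure bookkeeping.
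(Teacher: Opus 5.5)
Your squeeze is set up correctly. The bound $\mc P_n(\psi)\leqslant\mc P(\psi)$ is trivial, and the union bound showing that $G_{kn}$ has $\mu_\psi$-mass at least $1-2Ckn\,\me^{-\gamma n}$ is fine. The argument fails exactly at the step you flagged, and your proposed fix is not correct: no spacer of \emph{fixed} length, in particular no run $a^r$, prevents forbidden words from appearing across junctions. Your condition on the initial run of $c$ concerns only occurrences of $w=w_1\cdots w_n\in\mathcal{F}_n$ that begin inside the spacer. Two other kinds of occurrence remain.

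\emph{An occurrence can begin inside the block $u_1$.} Take any non-dyadic $c$, so $w=c_1\cdots c_n\in\mathcal{F}_n$ and $w_n=a$ for infinitely many $n$. A block $u_1\in G_{kn}$ ending in $w_1\cdots w_{n-1}$ is not excluded by the definition of $G_{kn}$, and it produces $w$ across $u_1a^r$.

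\emph{An occurrence can contain the entire spacer.} If $c$ has a normal binary expansion, every fixed word $s$ occurs in $c_1\cdots c_n$ for all large $n$, say at position $i+1$. Then blocks $u_1$ ending in $w_1\cdots w_i$ and $u_2$ beginning with $w_{i+|s|+1}\cdots w_n$ place $w$ inside $u_1su_2$.

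So the bookkeeping you call standard rests on a false claim. Your transfer-operator alternative is only a pointer to a Keller--Liverani type argument, not a proof.

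The missing idea is the one the paper takes from Ferguson--Pollicott: let the marker word grow with $n$ while staying $o(n)$. Two words of length $n$ contain at most $2(n-k+1)$ subwords of length $k$. Hence for $k=k(n)\approx\log_2 n+2$ there is $x\in\mathcal{A}^k$ that occurs in no $w\in\mathcal{F}_n$. Every sequence $xv_1xv_2x\cdots$ with \emph{arbitrary} $v_i\in\mathcal{A}^{n-2k+1}$ then lies in $\mathbb{X}_n$, because every window of length $n$ contains a full copy of $x$. No restriction on the blocks and no junction analysis is needed. A Bernoulli-type measure on these concatenations gives $\mc P_n(\psi)\geqslant\bigl((n-2k+1)\mc P(\psi)-k\|\psi\|_\infty-V\bigr)/(n-k+1)\to\mc P(\psi)$, with $V$ the distortion constant.

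Alternatively, your Gibbs-measure route can be repaired with no spacer at all. Additionally discard blocks that end with a prefix $w_1\cdots w_a$, or begin with a suffix $w_{n-b+1}\cdots w_n$, of some $w\in\mathcal{F}_n$ with $a,b\geqslant n/2$. By shift-invariance these carry $\mu_\psi$-mass $O(\me^{-\gamma n/2})$. Free concatenations of the remaining blocks then avoid $\mathcal{F}_n$, because any occurrence straddling a junction has at least $n/2$ of its letters on one side.
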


\begin{proof}[Sketch of proof.]
The result in \cite[Prop.~3.8]{FergusonPollicott} covers the case where $\mc F_n$ is chosen to be a prefix of $c$ of length $n$. The proof relies on the fact that $\mathbb{X}_n$ is topologically mixing for large enough $n$, which is still true in our setting, see \cite[Prop.~6.3]{GKS}. A key step in the proof of \cite[Prop.~3.8]{FergusonPollicott} is to find lengths $k$ and $n$ such that $k$ is an arbitrarily small fraction of $n$ and that there is a word $x$ of length $k$ that does not appear in the unique word in $\mc F_n$. In our case, adjusting $k$ slightly, it is clearly possible to ensure that $x$ does not appear in any word of $\mc F_n$. The remainder of the proof carries over verbatim.
\end{proof}

\begin{coro}
\label{CORO:Hoelder-pressure-variation}
Let $\psi$ be a H\"{o}lder continuous function on $\mathbb{T}$ and $c \in \mathbb{T}$. Then, we have that
\[
\mc P_{\operatorname{var}}(\psi) = \sup_{\mu \in \mc M_c} h(\mu) + \int \psi \dd \mu.
\]
\end{coro}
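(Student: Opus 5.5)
The plan is to squeeze the modified pressure between $\mc P_n(\psi)$ and $\mc P_{\operatorname{var}}(\psi)$, and then let $n\to\infty$ using Proposition~\ref{PROP:SFT-approximation}.

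The upper bound is immediate. Since $\mc M_c \subseteq \mc M$, we have
\[
\sup_{\mu \in \mc M_c} h(\mu) + \int \psi \dd \mu \leqslant \sup_{\mu \in \mc M} h(\mu) + \int \psi \dd \mu = \mc P_{\operatorname{var}}(\psi).
\]

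For the lower bound, fix $n$ and take an invariant measure $\nu \in \mathcal M(\mathbb{X}_n)$ on the subshift of finite type $\mathbb{X}_n$. Push it forward to $\mathbb{T}$ via $\pi_2$ to get $\mu = \nu\circ\pi_2^{-1}$. We need three facts about $\mu$.

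\textbf{Invariance.} $\mu$ is $T$-invariant, because $\pi_2$ semi-conjugates $S$ and $T$.

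\textbf{Entropy and integral.} We have $h(\mu) = h(\nu)$, since $\pi_2$ is at most $2$-to-$1$, so the factor map is finite-to-one and preserves entropy. Also $\int \psi \dd\mu = \int \psi\circ\pi_2 \dd \nu$. Here $\psi$ on $\mathbb{X}$ is read as $\psi\circ\pi_2$, which is Hölder in the symbolic metric.

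\textbf{Support avoids $c$.} This is the point needing care. Every point $x$ in the support of $\nu$ lies in $\mathbb{X}_n$, so its length-$n$ prefix is not in $\mathcal F_n$. Hence $[x_1\cdots x_n]\cap B_{2^{-(n+1)}}(c)=\emptyset$, and so $\pi_2(x)\notin B_{2^{-(n+1)}}(c)$. (Interpret $[w]$ as a subset of $\mathbb{T}$ through $\pi_2$, a closed dyadic interval.) The support of $\nu$ is compact and $\pi_2$ is continuous, so
\[
\operatorname{supp}\mu = \pi_2(\operatorname{supp}\nu) \subseteq \mathbb{T}\setminus B_{2^{-(n+1)}}(c),
\]
which is a closed set not containing $c$. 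Therefore $\mu\in\mc M_c$.

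One subtlety concerns the endpoints. If $c$ is an endpoint of a cylinder interval, or a dyadic rational with two codings, the definition of $\mathcal F_n$ via the open ball still removes every cylinder whose image meets the ball. This is exactly why $\mathcal F_n$ is allowed two elements, so the argument is unaffected.

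Combining these facts gives
\[
\mc P_n(\psi) \leqslant \sup_{\mu\in\mc M_c} h(\mu)+\int\psi\dd\mu
\]
for every $n$. Letting $n\to\infty$ and applying Proposition~\ref{PROP:SFT-approximation} yields $\mc P_{\operatorname{var}}(\psi) = \mc P(\psi) \leqslant \sup_{\mc M_c}$.

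We also use that for Hölder $\psi$ the topological and variational pressures on the full shift and on the doubling map coincide. This holds because $\pi_2$ is finite-to-one and every $T$-invariant measure lifts to an $S$-invariant measure with the same entropy and integral. Together with the upper bound this proves the claimed equality.

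The main obstacle is the support step: checking that the forbidden-word construction really keeps a uniform neighbourhood of $c$ out of $\pi_2(\mathbb{X}_n)$. Everything else is formal.
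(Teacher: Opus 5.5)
Your proposal is correct and follows the paper's proof. You bound $\sup_{\mu\in\mc M_c}$ above by $\mc P_{\operatorname{var}}(\psi)$ trivially, and below by the SFT pressures $\mc P_n(\psi)$, because invariant measures on $\mathbb{X}_n$ avoid a neighbourhood of $c$; letting $n\to\infty$ via Proposition~\ref{PROP:SFT-approximation} then gives equality. The only difference is that you spell out the transfer through $\pi_2$ (invariance, entropy, support, and equality of the symbolic and toral pressures), which the paper suppresses by identifying $\mathbb{X}$ with $\mathbb{T}$ without loss of generality.
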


\begin{proof}
Let $P = \mc P_{\operatorname{var}}(\psi)$ and $P_n = \mc P(\psi)$.
For the requirements of this proof we may and will identify the full shift $(\mathbb{X},S)$ with the doubling map without loss of generality. 
By Proposition~\ref{PROP:SFT-approximation} we can choose a value $n\in \N$ such that $P_n$ is arbitrarily close to $P$. 
Since $P_n$ is the pressure of $\psi$ over $(\mathbb{X}_n,S)$, we can find an invariant measure $\mu_n$ supported on $\mathbb{X}_n$ such that $h(\mu_n) + \int \psi \dd \mu_n$ is arbitrarily close to $P_n$. Since $\mathbb{X}_n$ avoids a neighbourhood of $c$, we conclude that for all $\varepsilon > 0$ we can find $\mu_n \in \mc M_c$ with
\[
P \geqslant \sup_{\mu \in \mc M_c} h(\mu) + \int \psi \dd \mu 
\geqslant h(\mu_n) + \int \psi \dd \mu_n \geqslant P - \varepsilon,
\]
which gives the desired relation.
\end{proof}

For $c \in \mathbb{T}$ and $\mu \in \mc M_c$, let us set 
\[
p^{\mu}_c(t) = h(\mu) + \int t \psi_c \dd \mu,
\]
such that $p_c(t) = \sup_{\mu \in \mc M_c} p^{\mu}_c(t)$ for all $t \in \mathbb{R}$. With this notation, we rephrase Theorem~\ref{THM:variational-pressure-equality} as follows.

\begin{prop}
\label{PROP:neg-t-pressure-variation}
For all $c \in \mathbb{T}$ and $t< 0$, we have that
\[
\sup_{\mu \in \mc M_c} p_c^{\mu}(t)
= \sup_{\mu \in \mc M} p_c^{\mu}(t).
\]
\end{prop}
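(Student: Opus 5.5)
Since $\mc M_c \subseteq \mc M$, the inequality "$\leqslant$" is immediate. For "$\geqslant$" the plan is to take any $\mu \in \mc M$ with $c \in \operatorname{supp}\mu$ and approximate $p^{\mu}_c(t)$ from below by measures in $\mc M_c$. Write $s = -t > 0$, so that $t\psi_c = s\,|\psi_c| \geqslant 0$. The potential is now lower semicontinuous with values in $[0,\infty]$ and has a pole at $c$. Measures charging neighbourhoods of $c$ are rewarded rather than penalised, so the approximating measures must keep that reward. We may assume $p^{\mu}_c(t) > -\infty$; this is automatic, since the integrand is non-negative. By the ergodic decomposition, and because entropy and integrals are affine, it suffices to treat ergodic $\mu$. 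Here $\int |\psi_c| \dd\mu$ may equal $+\infty$, in which case the right-hand side is $+\infty$ and we must show that the left-hand side is as well.

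\textbf{Step 1: truncation.} For $M > 0$ set $\phi_M = \min(s|\psi_c|, M)$. This is continuous on $\mathbb{T}$ and can be uniformly approximated from below by Lipschitz (hence Hölder) functions $\phi_{M,k} \uparrow \phi_M$. By monotone convergence, $h(\mu) + \int \phi_{M,k}\dd\mu \to h(\mu) + \int s|\psi_c| \dd\mu$ as $M,k\to\infty$, including the case where the limit is $+\infty$. Hence, for any target $L < p^{\mu}_c(t)$, we can fix a Hölder $\phi := \phi_{M,k}$ with $0 \leqslant \phi \leqslant s|\psi_c|$ and $h(\mu) + \int \phi \dd \mu > L$.

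\textbf{Step 2: apply Corollary~\ref{CORO:Hoelder-pressure-variation}.} Since $\phi$ is Hölder,
\[
\sup_{\nu \in \mc M_c} h(\nu) + \int \phi \dd\nu = \mc P_{\operatorname{var}}(\phi) \geqslant h(\mu) + \int \phi\dd\mu > L .
\]
So there is $\nu \in \mc M_c$ with $h(\nu) + \int\phi\dd\nu > L$. Because $\phi \leqslant s|\psi_c| = t\psi_c$ pointwise, $p^{\nu}_c(t) \geqslant h(\nu) + \int\phi\dd\nu > L$. Since $L$ was arbitrary, this gives the "$\geqslant$" inequality, with the value $+\infty$ handled simultaneously.

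\textbf{Main obstacle.} The main point is the monotonicity in Step 2. It works only because $t<0$ makes the truncated potential a minorant of $t\psi_c$, so any measure in $\mc M_c$ does at least as well for $t\psi_c$ as for $\phi$. (For $t>0$ truncation would go the wrong way, which is why that case was handled separately in \cite{GKS}.) The remaining technical care lies in two places:
\begin{itemize}
\item Constructing Hölder minorants $\phi_{M,k}$ that converge monotonically to $\phi_M$. A Lipschitz inf-convolution $\phi_{M,k}(x) = \inf_y \bigl(\phi_M(y) + k\,d(x,y)\bigr)$ does this, since $\phi_M$ is continuous and bounded.
\item Justifying monotone convergence and the reduction to ergodic measures when $\int|\psi_c|\dd\mu = \infty$.
\end{itemize}
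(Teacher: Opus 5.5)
Your proposal is correct and is essentially the paper's proof: your truncation $\min(s|\psi_c|,M)$ coincides with $t\psi_c^{N}$ for $\psi_c^N=\max\{\psi_c,-N\}$ and $N=M/s$, and the paper likewise applies Corollary~\ref{CORO:Hoelder-pressure-variation} to this truncated potential and then uses that, for $t<0$, it is a pointwise minorant of $t\psi_c$. The inf-convolution and the ergodic reduction are superfluous (the truncated potential is already Lipschitz, since $\psi_c$ has bounded derivative on $\{\psi_c\geqslant -N\}$, and Steps~1--2 never use ergodicity), while your ``arbitrary target $L$'' formulation neatly merges the cases $P<\infty$ and $P=\infty$ that the paper treats separately.
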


\begin{proof}
Let $P = \mc P_{\operatorname{var}}(t\psi) =  \sup_{\mu \in \mc M} p_{c}^\mu(t)$.
For every $N \in \N$, we consider the function $\psi_c^N \colon x \mapsto \max\{ \psi_c(x),- N \}$, which introduces a ``cut-off" for the value of the singularity at $-N$ and defines a continuous function on $\mathbb{T}$. For every measure $\mu \in \mc M$, we have due to the monotone convergence theorem,
\[
\int \psi_c \dd \mu = \lim_{N \to \infty} \int \psi_c^N \dd \mu.
\]
If $P< \infty$, we can choose for every $\varepsilon > 0$, a measure $\mu \in \mc M$ with $|P - p_c^{\mu}(t)| < \varepsilon$ and a value $N \in \N$ such that $p_c^\mu(t)$ differs by no more than $\varepsilon$ from
\[
h(\mu) + \int t \psi_c^N \dd \mu \leqslant \mc P_{\operatorname{var}}(t \psi_c^N).
\]
Since $t \psi_c^N$ is H\"{o}lder continuous, Corollary~\ref{CORO:Hoelder-pressure-variation} implies that we can find a measure $\nu \in \mc M_c$ such that 
\[
h(\nu) + \int t \psi_c^N \dd \nu \geqslant \mc P_{\operatorname{var}}(t \psi_c^N) - \varepsilon \geqslant p_c^{\mu}(t) - 2 \varepsilon \geqslant P - 3 \varepsilon. 
\]
Since for $t<0$, the family $t \psi_c^N$ is monotonically increasing in $n$, we get
\[
P \leqslant h(\nu) + \int t \psi_c^N \dd \nu  + 3 \varepsilon \leqslant h(\nu) + \int t\psi_c \dd \nu + 3 \varepsilon 
\leqslant \sup_{\nu \in \mc M_c} p_c^{\nu} + 3 \varepsilon,
\]
and since $\varepsilon >0$ was arbitrary, the claim follows. 
If $P = \infty$, we can find for every $M \in \N$ a measure $\mu \in \mc M$ such that $p_c^\mu(t) > M$. Similarly as above, we can find $N \in \N$ and a measure $\nu \in \mc M_c$ such that 
\[
p_c^\nu(t) \geqslant
h(\nu) + \int t \psi_c^N \dd \nu
\geqslant \mc P_{\operatorname{var}}(t \psi_c^N) - 1
\geqslant p_c^\mu(t) - 2 > M - 2.
\]
Again this yields that $\sup_{\nu \in \mc M_c} p_{c}^\nu(t) = \infty = P$, since $M$ was arbitrary.
\end{proof}

The case of positive $t \geqslant 0$ is covered by a complementary result in \cite{GKS}.

\begin{theorem}[{\cite[Thm.~2.3]{GKS}}]
\label{THM:pressure-variations}
For all $c \in \mathbb{T}$ and $t \geqslant 0$, we have
\[
\sup_{\mu \in \mathcal{M}_c} h(\mu) +   \int t \psi_c(x) \dd \mu(x) \ 
= \ 
\sup_{\mu \in \mc M} h(\mu) +   \int t \psi_c(x) \dd \mu(x).
\]
\end{theorem}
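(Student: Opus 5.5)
We prove the inequality ``$\leqslant$'' first, since it is immediate: $\mc M_c \subseteq \mc M$. For the reverse inequality with $t \geqslant 0$, fix $\mu \in \mc M$. We must show that $p_c^\mu(t)$ can be approximated from below by $p_c^\nu(t)$ with $\nu \in \mc M_c$. Two degenerate cases can be dispensed with at once. If $\int \psi_c \dd\mu = -\infty$ and $t>0$, then $p_c^\mu(t) = -\infty$ and there is nothing to prove. If $t=0$, the statement reduces to $\sup_{\mc M_c} h = \sup_{\mc M} h = \log 2$, which follows from Corollary~\ref{CORO:Hoelder-pressure-variation} applied to $\psi \equiv 0$.

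For $t>0$ we truncate, in the spirit of the proof of Proposition~\ref{PROP:neg-t-pressure-variation}. Set $\psi_c^N = \max\{\psi_c, -N\}$, which is continuous (indeed Lipschitz) and satisfies $\psi_c^N \geqslant \psi_c$. Since $t>0$, the inequality now runs the wrong way: $p_c^\nu(t) \leqslant h(\nu) + \int t\psi_c^N \dd\nu$. So we cannot simply pass from the truncated pressure to $p_c^\nu(t)$. Instead we proceed as follows.

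\begin{enumerate}
\item Apply Proposition~\ref{PROP:SFT-approximation} in a sharper form. For large $n$, the SFT $\mathbb{X}_n$ avoids the ball $B_{2^{-(n+1)}}(c)$. On that set $\psi_c \geqslant 2\log|\sin(\pi 2^{-(n+1)})| \approx -2(n+1)\log 2$, so on $\mathbb{X}_n$ the potential $\psi_c$ is bounded and agrees with $\psi_c^{N}$ for $N \approx 2n\log 2$.
\item Hence the pressure $\mc P_n(t\psi_c)$ over $\mathbb{X}_n$ is realised by measures in $\mc M_c$. It suffices to show $\liminf_n \mc P_n(t\psi_c) \geqslant p_c^\mu(t)$.
\item To prove this, approximate $\mu$ by an ergodic measure (via ergodic decomposition, using affinity of $h$ and of the integral) and then by periodic or Markov measures avoiding a neighbourhood of $c$. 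The intended tool is a specification and coding argument: take generic orbit segments of $\mu$ of length $m$, which carry entropy $\approx h(\mu)$ and Birkhoff averages $\approx \int \psi_c \dd \mu$. Remove the segments that enter $B_{2^{-(n+1)}}(c)$ and glue the rest with short connecting words inside $\mathbb{X}_n$, using the mixing of $\mathbb{X}_n$ from \cite[Prop.~6.3]{GKS}. Because $t>0$ and $\psi_c \leqslant 0$, the excursions near $c$ only lowered the Birkhoff sum, so discarding them does not decrease $t\int\psi_c$. The connecting words change averages by $O(k/m)$, since $\psi_c$ is bounded on $\mathbb{X}_n$.
\end{enumerate}

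The main obstacle is the entropy bookkeeping in step 3. Discarding orbit segments that visit the shrinking neighbourhood of $c$ must cost negligible entropy. Since $\mu(B_{2^{-(n+1)}}(c)) \to 0$ whenever $\mu(\{c\})=0$, the proportion of discarded time tends to zero as $n \to \infty$, and a standard counting estimate bounds the entropy loss by $O(\mu(B) \log(1/\mu(B)))$. Atoms at $c$ arise only when $c$ is periodic, in which case $\mu$ charges the periodic orbit of $c$ and $\int \psi_c \dd\mu = -\infty$; that situation is already covered by the degenerate case above.

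Equivalently, this step is upper semicontinuity of the entropy combined with approximation of $\mu$ by measures whose supports stay a positive distance from $c$, and I would try to import it from the construction behind \cite[Prop.~3.8]{FergusonPollicott}.
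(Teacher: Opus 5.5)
The paper does not prove this statement; it quotes it from \cite[Thm.~2.3]{GKS}. So I can only assess your argument on its own terms, and it has a genuine gap in step~3.

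Your degenerate cases and steps~1--2 are correct, but they only restate the theorem. Every $\nu\in\mathcal{M}_c$ is supported on some $\mathbb{X}_n$, and $\mathbb{X}_n\subseteq\mathbb{X}_{n+1}$. Hence $\lim_n\mathcal{P}_n(t\psi_c)=\sup_{\nu\in\mathcal{M}_c}p_c^{\nu}(t)$, and the inequality $\liminf_n\mathcal{P}_n(t\psi_c)\geqslant p_c^{\mu}(t)$ \emph{is} the claim. The ``sharper form'' of Proposition~\ref{PROP:SFT-approximation} you invoke is not available. On $\mathbb{X}_n$ the potential equals $t\psi_c^{N}$ with $N\approx 2n\log 2$, and this function has Lipschitz constant of order $2^{n}$. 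So you would need the Ferguson--Pollicott convergence uniformly along this diagonal, which is exactly what is at stake. All the content therefore sits in step~3.

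In step~3 the estimates you give do not fit together. For fixed $n$, a $\mu$-generic segment of length $m$ visits $B=B_{2^{-(n+1)}}(c)$ about $m\,\mu(B)$ times.
\begin{enumerate}
\item If you discard every segment that meets $B$, then as $m\to\infty$ you discard almost everything and lose control of the entropy.
\item If you cut at every visit, which is what your $O(\mu(B)\log(1/\mu(B)))$ count presupposes, then there are about $\mu(B)$ junctions per unit time, not $1/m$.
\end{enumerate}
In the second case each junction costs something in the Birkhoff sum. The connecting word can cost about $2(n+1)\log 2$ per symbol. In addition, the last $\approx n$ points of each retained piece receive a new future. The only bound you use, $\psi_c\geqslant -2(n+1)\log 2-O(1)$ on $\mathbb{X}_n$, lets each of these drop by $O(n)$. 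Your remark that excursions near $c$ only lowered the sum covers the removed points, not these modified ones. The crude loss is therefore of order $n^2\mu(B_{2^{-n}}(c))$ per unit time. The same requirement $n^2\ll m\ll \mu(B_{2^{-n}}(c))^{-1}$ appears if you instead balance $m$ against $n$ in the discard-whole-segments version.

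All you actually know is $\int\psi_c\dd\mu>-\infty$. This is equivalent to $\sum_n\mu(B_{2^{-n}}(c))<\infty$, and by monotonicity it gives $n\,\mu(B_{2^{-n}}(c))\to0$. It does not give $n^2\mu(B_{2^{-n}}(c))\to 0$: a decreasing summable sequence such as $1/(n\log^2 n)$ violates it.

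So the main obstacle is not the entropy bookkeeping. It is paying for the junctions of an unbounded potential with the integrability of $\psi_c$, which your sketch never uses; it only uses $\mu(\{c\})=0$. A repair would need one of two things:
\begin{enumerate}
\item A genuine junction lemma. For instance, choose connecting words so that a new orbit point sharing $\ell$ symbols with the old one stays at distance $\gtrsim 2^{-\ell}$ from $c$. Comparing with the old orbit then gives a loss of $O(1)$ per point and $O(n)$ per junction, and one concludes with $n\,\mu(B_{2^{-n}}(c))\to0$.
\item A near-maximising measure with a quantitative, say exponential, bound on $\mu(B_r(c))$. This is what the Gibbs property supplies in the H\"older setting behind Ferguson--Pollicott.
\end{enumerate}
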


Combining this with Proposition~\ref{PROP:neg-t-pressure-variation}, we obtain the following.

\begin{theorem}
\label{THM:pressure-variation-full-t}
For all $c \in \mathbb{T}$ and $t \in \mathbb{R}$, we have
\[
p_c(t) =
\sup_{\mu \in \mc M_c} p_{c}^{\mu}(t)
= \sup_{\mu \in \mc M} p_{c}^{\mu}(t).
\]
\end{theorem}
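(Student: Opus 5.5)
The statement is obtained by splitting according to the sign of $t$ and invoking the two complementary results already at hand. The first equality, $p_c(t) = \sup_{\mu \in \mc M_c} p_c^{\mu}(t)$, is simply the definition of $p_c(t) = \mc P(t\psi_c)$ recalled in the introduction, rewritten with the notation $p_c^{\mu}(t) = h(\mu) + \int t\psi_c \dd \mu$. So only the second equality needs an argument.

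For $t \geqslant 0$ it is exactly Theorem~\ref{THM:pressure-variations} from \cite{GKS}, once $h(\mu) + \int t\psi_c \dd\mu$ is recognised as $p_c^{\mu}(t)$. For $t<0$ it is exactly Proposition~\ref{PROP:neg-t-pressure-variation}. Since the two cases cover all of $\mathbb{R}$, the claim follows.

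The only points needing care are bookkeeping, not new estimates. First, $p_c^{\mu}(t)$ must make sense for every $\mu\in\mc M$, not only for $\mu \in \mc M_c$. Because $\psi_c\leqslant 0$, the integral $\int \psi_c \dd\mu$ is well defined in $[-\infty,0]$. Hence $p_c^{\mu}(t)$ takes values in $[-\infty,\infty)$ for $t>0$ and in $(-\infty,\infty]$ for $t<0$, and both suprema are taken in the extended reals. Second, for $t<0$ the supremum may equal $+\infty$; this case is already handled inside the proof of Proposition~\ref{PROP:neg-t-pressure-variation}, so the equality holds in $(-\infty,\infty]$ as well.

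I therefore expect no genuine obstacle here. The substantive difficulty was already resolved in Proposition~\ref{PROP:neg-t-pressure-variation}, through the cut-off $\psi_c^N$ and the SFT approximation of Corollary~\ref{CORO:Hoelder-pressure-variation}, and for $t\geqslant0$ in \cite{GKS}.
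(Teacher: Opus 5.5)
Your proposal is correct and follows the paper's argument: the first equality is the definition of $p_c(t)$, and the second is obtained by combining Theorem~\ref{THM:pressure-variations} for $t \geqslant 0$ with Proposition~\ref{PROP:neg-t-pressure-variation} for $t<0$. Your remarks on extended-real values (with $\int \psi_c \dd\mu \in [-\infty,0]$ and possibly infinite suprema for $t<0$) are sound bookkeeping that the paper leaves implicit.
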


This shows that in fact the modified version of the pressure function considered in \cite{FSS22} is equal to the standard variational pressure function (and equal to an appropriate version of the topological pressure function by the results in \cite{GKS}).

\section{Parameters with infinite pressure}

As an immediate consequence of the results presented in the last section, we obtain that the supremum in the definition of $\beta(c) = \sup_{\mu \in \mc M} \int \psi_c \dd \mu$ can be restricted to the space $\mc M_c$ of measures that avoid the point $c$ in their support. A similar statement holds for $\alpha(c) = \inf_{\mu \in \mc M} \int \psi_c \dd \mu$. It should be noted that the following result is already covered by \cite[Prop.~2.1]{FSS22} in the case that $c \in \mathbb{T}$ is not a periodic point.

\begin{coro}
\label{CORO:alpha-beta-version}
For all $c \in \mathbb{T}$, it is $\alpha(c) = \inf_{\mu \in \mc M_c} \int \psi_c \dd \mu$ and $\beta(c) = \sup_{\mu \in \mc M_c} \int \psi_c \dd \mu$.
\end{coro}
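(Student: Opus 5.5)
The plan is to obtain both identities from Theorem~\ref{THM:pressure-variation-full-t} by a scaling argument in $t$. This works because the entropy term in $p_c^\mu(t)$ is bounded uniformly: $0 \leqslant h(\mu) \leqslant \log 2$ for every $\mu \in \mc M$. Since $\mc M_c \subseteq \mc M$, the inequalities
\[
\inf_{\mu \in \mc M} \int \psi_c \dd \mu \leqslant \inf_{\mu \in \mc M_c} \int \psi_c \dd \mu,
\qquad
\sup_{\mu \in \mc M_c} \int \psi_c \dd \mu \leqslant \sup_{\mu \in \mc M} \int \psi_c \dd \mu
\]
are trivial, so only the reverse inequalities need proof.

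\emph{The statement for $\beta(c)$.} Take $t>0$. Applying Theorem~\ref{THM:pressure-variation-full-t} and the entropy bound gives
\[
t \sup_{\mu \in \mc M} \int \psi_c \dd \mu \leqslant \sup_{\mu \in \mc M} p_c^\mu(t) = \sup_{\mu \in \mc M_c} p_c^\mu(t) \leqslant \log 2 + t \sup_{\mu \in \mc M_c} \int \psi_c \dd \mu .
\]
Now divide by $t$ and let $t \to \infty$. This yields $\beta(c) \leqslant \sup_{\mu \in \mc M_c} \int \psi_c \dd \mu$. The quantity $\beta(c)$ is finite, for instance because $\beta(c) \geqslant \int \psi_c \dd \lambda = -2\log 2$ for Lebesgue measure $\lambda$, and $\psi_c \leqslant 0$. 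So the division involves no infinities.

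\emph{The statement for $\alpha(c)$.} Take $t<0$ and write $s=-t>0$. Then $p_c^\mu(t) = h(\mu) + s \int (-\psi_c) \dd \mu$. The same chain of inequalities gives
\[
s \sup_{\mu \in \mc M} \int (-\psi_c) \dd \mu \leqslant \sup_{\mu\in\mc M} p_c^\mu(t) = \sup_{\mu \in \mc M_c} p_c^\mu(t) \leqslant \log 2 + s\sup_{\mu \in \mc M_c} \int (-\psi_c) \dd \mu .
\]
Dividing by $s$ and letting $s \to \infty$ gives $-\alpha(c) \leqslant -\inf_{\mu \in \mc M_c}\int \psi_c \dd \mu$. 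This is exactly the missing inequality.

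\emph{The case $\alpha(c) = -\infty$.} This is the only point needing care. Here the left-hand side is $+\infty$ for every $s$, and Proposition~\ref{PROP:neg-t-pressure-variation} (which also covers $P=\infty$) forces $\sup_{\mu\in\mc M_c} p_c^\mu(t) = \infty$. By the entropy bound, this in turn forces $\sup_{\mu \in \mc M_c} \int(-\psi_c)\dd\mu = \infty$. So the infimum over $\mc M_c$ is also $-\infty$. All manipulations are valid in the extended reals, and no further obstacle is expected.
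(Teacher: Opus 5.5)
Your proposal is correct and takes essentially the same route as the paper: both derive the identities from Theorem~\ref{THM:pressure-variation-full-t} together with the entropy bound $0 \leqslant h(\mu) \leqslant \log 2$, dividing by $t$ and letting $t \to \pm\infty$. Your version spells out the two-sided squeeze inequalities and the extended-real case $\alpha(c) = -\infty$, which the paper leaves implicit in its limit expressions.
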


\begin{proof}
Since $0 \leqslant h(\mu) \leqslant \log(2)$ for every $\mu \in \mc M$, we get by Theorem~\ref{THM:pressure-variation-full-t} that
\[
\beta(c) = \lim_{t \to \infty} \frac{1}{t} \biggl( \sup_{\mu \in \mc M} h(\mu) + \int t \psi_c \dd \mu \biggr)
= \lim_{t \to \infty} \frac{1}{t} \biggl( \sup_{\mu \in \mc M_c} h(\mu) + \int t \psi_c \dd \mu \biggr) = \sup_{\mu \in \mc M_c} \int \psi_c \dd \mu,
\] 
which is the required relation for $\beta(c)$. Similarly,
\[
\alpha(c) = \lim_{t \to - \infty} \frac{1}{t} \biggl( \sup_{\mu \in \mc M} h(\mu) + \int t \psi_c \dd \mu \biggr)
= \lim_{t \to - \infty} \frac{1}{t} \biggl( \sup_{\mu \in \mc M_c} h(\mu) + \int t \psi_c \dd \mu \biggr) = \inf_{\mu \in \mc M_c} \int \psi_c \dd \mu,
\] 
which finishes the proof.
\end{proof}

The alternative expression for $\alpha(c)$ is the essential ingredient to show that $\alpha(c) = - \infty$ characterises those points $c$ with $p_c(t) = \infty$ for $t < 0$.

\begin{proof}[Proof of Proposition~\ref{PROP:infinite-pressure}]
By Corollary~\ref{CORO:alpha-beta-version}, $\alpha(c) = - \infty$ if and only if $\inf_{\mu \in \mc M_c}\int \psi_c \dd \mu  = - \infty$. For $t < 0$, we have
\[
p_c(t) = t \left(\inf_{\mu \in \mc M_c} t^{-1} h(\mu) + \int \psi_c \dd \mu \right),
\]
and since $0 \leqslant h(\mu) \leqslant \log(2)$ for all $\mu \in \mc M_c$, we get that $p_c(t)$ is equal to $\infty$ if and only if $\inf_{\mu \in \mc M_c} \int \psi_c \dd \mu = - \infty$.
\end{proof}

\section{Lower semicontinuity}\label{Section:lower}

In this section, we prove the first two statements of Theorem~\ref{THM:main}. In the previous sections, we showed that $\alpha(c)$ and $\beta(c)$ admit variational characterizations analogous to that of the modified pressure $p_c(t)$, and we also described the parameters that lead to infinite pressure when $t<0$. The key observation behind the lower semicontinuity of $c \mapsto p_c(t)$ is that any measure in $\mc M_c$ is supported away from the singularity. Hence, whenever $c'$ is close to $c$, the rotated potentials $\psi_{c'}$ vary in a controlled way on the support of such a measure. This yields lower semicontinuity of $c \mapsto p_c(t)$, as well as the corresponding one-sided semicontinuity properties of $\alpha(c)$ and $\beta(c)$.

\begin{prop}
\label{PROP:lower-semicontinuity}
For each $t \in \mathbb{R}$ the function $c \mapsto p_c(t)$ is lower semicontinuous. Also, $c \mapsto \alpha(c)$ is upper semicontinuous and $c \mapsto \beta(c)$ is lower semicontinuous.
\end{prop}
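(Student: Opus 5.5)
We will use the variational characterization $p_c(t) = \sup_{\mu \in \mc M_c} p_c^\mu(t)$ from Theorem~\ref{THM:pressure-variation-full-t}, together with the corresponding formulas for $\alpha(c)$ and $\beta(c)$ from Corollary~\ref{CORO:alpha-beta-version}. Each of $c \mapsto p_c(t)$ and $c \mapsto \beta(c)$ is a supremum over measures avoiding $c$, and $\alpha(c)$ is an infimum over such measures. So it suffices to show that each individual quantity $c' \mapsto p^\mu_{c'}(t)$, respectively $c' \mapsto \int \psi_{c'} \dd\mu$, is continuous at $c$ for fixed $\mu \in \mc M_c$. Some care is needed because the index set $\mc M_{c'}$ itself depends on $c'$.

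Fix $c$, $t$ and a measure $\mu \in \mc M_c$. Since the support of $\mu$ is closed and does not contain $c$, there is $\delta>0$ with $\operatorname{supp}\mu \cap B_{2\delta}(c) = \emptyset$. For every $c' \in B_\delta(c)$ we then have $c' \notin \operatorname{supp}\mu$, so $\mu \in \mc M_{c'}$. Hence
\[
p_{c'}(t) \geqslant h(\mu) + t \int \psi_{c'} \dd \mu .
\]
On the compact set $K = \mathbb{T} \setminus B_{2\delta}(c)$, the map $(x,c') \mapsto \psi_{c'}(x) = 2\log|\sin(\pi(x-c'))|$ is jointly continuous and bounded for $c' \in \overline{B_{\delta}(c)}$, because $|x-c'| \geqslant \delta$ there. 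Therefore $\psi_{c'} \to \psi_c$ uniformly on $K \supseteq \operatorname{supp}\mu$ as $c' \to c$, which gives $\int \psi_{c'} \dd\mu \to \int \psi_c \dd\mu$. It follows that
\[
\liminf_{c' \to c} p_{c'}(t) \geqslant p^\mu_c(t).
\]
Taking the supremum over $\mu \in \mc M_c$ yields $\liminf_{c'\to c} p_{c'}(t) \geqslant p_c(t)$, which is lower semicontinuity. This argument also covers the case $p_c(t) = \infty$, since then for every $M$ we can choose $\mu$ with $p_c^\mu(t) > M$.

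The same argument applied to $\int \psi_{c'}\dd\mu$ gives $\liminf_{c'\to c}\beta(c') \geqslant \int\psi_c \dd\mu$ for each $\mu \in \mc M_c$, so $\beta$ is lower semicontinuous. In the same way, $\limsup_{c'\to c}\alpha(c') \leqslant \int\psi_c\dd\mu$ for each $\mu\in\mc M_c$; taking the infimum over $\mu$ and using Corollary~\ref{CORO:alpha-beta-version} shows that $\alpha$ is upper semicontinuous. This also works when $\alpha(c) = -\infty$, by choosing $\mu$ with arbitrarily negative integral.

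The main point requiring care is the step where the supremum is restricted to $\mc M_c$. Over all of $\mc M$ this argument would fail, because a measure charging near $c$ does not stay admissible and the integrals of $\psi_{c'}$ are not continuous in $c'$. This is exactly why Theorem~\ref{THM:pressure-variation-full-t} and Corollary~\ref{CORO:alpha-beta-version} are needed. Beyond that, only the uniform convergence on the support, which is bounded away from the pole, has to be verified, and it is routine.
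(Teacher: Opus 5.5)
Your proof is correct and takes essentially the same route as the paper. You fix a (near-)optimal measure in $\mc M_c$, observe that its support stays a fixed distance from the pole for all $c'$ near $c$ (so $\psi_{c'}\to\psi_c$ uniformly there), and pass to the supremum or infimum using the $\mc M_c$-characterizations of $p_c(t)$, $\alpha(c)$ and $\beta(c)$. The differences are cosmetic: the paper uses an explicit Lipschitz bound $|\psi_c-\psi_{c'}|\leqslant L(r)\,d(c,c')$ on $\mathbb{T}\setminus B_r(c)$ where you use uniform continuity on a compact set, and you verify explicitly that $\mu\in\mc M_{c'}$ for nearby $c'$, which the paper uses tacitly.
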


\begin{proof}
Let $t \in \mathbb{R}$ and $c \in \mathbb{T}$. First let us assume that $p_c(t) \neq \infty$.
For $\varepsilon > 0$, choose $\mu \in \mc M_c$ such that $|p^{\mu}_c(t) - p_c(t)| < \varepsilon$. Since $\mu$ does not have $c$ in its support, there is a radius $r>0$ such that $\mu$ is supported on $\mathbb{T}_r:= \mathbb{T} \setminus B_r(c)$, where $B_r(c)$ is the closed ball of radius $r$ around $c$.   
Note that for all $d(c,c') < r/2$, the restriction of $\psi_{c'}$ to  $\mathbb{T}_r$ is Lispshitz continuous with some Lipshitz constant $L = L(r)$ that can be chosen independent of $c'$. For such $c'$ we obtain $|\psi_c(x) - \psi_{c'}(x)| \leqslant L \, d(c,c')$ for all $x\in \mathbb{T}_r$. Since $\mu$ is supported on $\mathbb{T}_r$, this yields
\begin{align*}
p_c(t) 
& \leqslant h(\mu) + t \int \psi_c \dd \mu + \varepsilon
\leqslant h(\mu) + t \int \psi_{c'} \dd \mu + |t| L \, d(c,c') + \varepsilon.
\\ &\leqslant p_{c'}(t) + |t| L \, d(c,c') + \varepsilon.
\end{align*}
Since $\varepsilon > 0$ was chosen arbitrarily, we get the desired relation $p_c(t) \leqslant \liminf_{c'\to c} p_{c'}(t)$.

By Corollary~\ref{CORO:alpha-beta-version}, we can find $\mu \in \mc M_c$ such that $|\beta(c) - \int \psi_c \dd \mu| < \varepsilon$. With the same choices for $r$ and $L$ as above, we obtain for $d(c,c') < r/2$ that
\[
\beta(c) \leqslant \int \psi_c \dd \mu + \varepsilon \leqslant \int \psi_c' \dd \mu + L d(c,c') + \varepsilon \leqslant \beta(c') + L d(c,c') + \varepsilon,
\]
yielding that $\liminf_{c' \to c} \beta(c') \geqslant \beta(c)$, and hence that $c \mapsto \beta(c)$ is also lower semicontinuous.
If $\alpha(c) > - \infty$, we can similarly find $\mu \in \mc M_c$ with $|\alpha(c) - \int \psi_c \dd \mu| < \varepsilon$, and $r,L$ such that for $d(c,c') < r/2$, we have
\[
\alpha(c) \geqslant \int \psi_c \dd \mu - \varepsilon 
\geqslant \int \psi_{c'} \dd \mu - L d(c,c') - \varepsilon 
\geqslant \alpha(c') - L d(c,c') - \varepsilon,
\]
which gives that $c \mapsto \alpha(c)$ is upper semicontinuous at such points.

If $p_c(t) = \infty$, let $N \in \N$ be arbitrary and choose $\mu \in \mc M_c$ such that $p^{\mu}_c(t) > N$. With the same choices as above, we obtain 
\[
N < h(\mu) + t \int \psi_c \dd \mu \leqslant p_{c'}(t) + |t| L d(c,c'),
\]
which yields $\liminf_{c' \to c} p_{c'}(t) \geqslant N$. Since $N$ can be chosen arbitrarily large, we obtain $\lim_{c' \to c} p_{c'}(t) = \infty$, as required.

Finally, assume that $\alpha(c) = - \infty$. For $N \in \N$, choose $\mu \in \mc M_c$ such that $\int \psi_c \dd \mu < -N$. With choices as above,
\[
-N > \int \psi_c \dd \mu \geqslant \int \psi_{c'} \dd \mu - L d(c,c')
\geqslant \alpha(c') - L d(c,c'),
\]
which gives $\liminf_{c' \to c} \alpha(c') = - \infty$, and hence finishes the proof that $c \mapsto \alpha(c)$ is upper semicontinuous at all points $c \in \mathbb{T}$.
\end{proof}

The lower semicontinuity from Proposition~\ref{PROP:lower-semicontinuity} is already enough to recover continuity at points where the pressure is infinite. Together with the density of parameters with infinite pressure, this yields a sharp description of the continuity set for $t<0$.
\begin{coro}
Let $t < 0$. Then, $c_0$ is a continuity point for $c \mapsto p_c(t)$ if and only if $c_0 \in \mc C_{\infty}$. Similarly, $c_0$ is a continuity point for $c \mapsto \alpha(c)$ if and only if $c_0 \in \mc C_{\infty}$.
In particular, the continuity points form a residual set of $0$ Hausdorff dimension in both cases.
\end{coro}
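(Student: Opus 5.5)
The plan is to combine the lower semicontinuity from Proposition~\ref{PROP:lower-semicontinuity} with the density information in Theorem~\ref{THM:lower-endpoint}. For $t<0$, Proposition~\ref{PROP:infinite-pressure} says that $p_c(t)=\infty$ exactly when $c\in\mc C_\infty$, which is also exactly when $\alpha(c)=-\infty$. I would treat the two directions of the equivalence separately, and run the pressure and the $\alpha$ statements in parallel.

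\emph{Points of $\mc C_\infty$ are continuity points.} Take $c_0\in\mc C_\infty$. Then $p_{c_0}(t)=\infty$, and lower semicontinuity gives $\liminf_{c\to c_0}p_c(t)\geqslant \infty$. Hence $\lim_{c\to c_0}p_c(t)=\infty=p_{c_0}(t)$, and continuity holds in the extended sense on $(-\infty,\infty]$. For $\alpha$, the last step of the proof of Proposition~\ref{PROP:lower-semicontinuity} already gives $\lim_{c\to c_0}\alpha(c)=-\infty=\alpha(c_0)$: there we found $\alpha(c')\leqslant -N+L\,d(c_0,c')$ for $c'$ near $c_0$, so $\limsup_{c'\to c_0}\alpha(c')=-\infty$.

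\emph{Points outside $\mc C_\infty$ are not continuity points.} Take $c_0\notin\mc C_\infty$. Then $p_{c_0}(t)<\infty$ and $\alpha(c_0)>-\infty$. Theorem~\ref{THM:lower-endpoint} says $\mc C_\infty$ is residual in $\mathbb{T}$, hence dense by Baire's theorem. So there is a sequence $c_n\to c_0$ with $c_n\in\mc C_\infty$. Along this sequence $p_{c_n}(t)=\infty\neq p_{c_0}(t)$ and $\alpha(c_n)=-\infty\neq\alpha(c_0)$, so both maps are discontinuous at $c_0$. The final claim, that the continuity set is residual with Hausdorff dimension $0$, is then just Theorem~\ref{THM:lower-endpoint} applied to the identified continuity set $\mc C_\infty$.

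The only delicate point is conventional: continuity at points with infinite value has to be understood with respect to the topology of the extended half-line. Once that is fixed, every step follows directly from results already proved, so I do not expect a genuine obstacle.
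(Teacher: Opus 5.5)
Your proposal is correct and follows essentially the same route as the paper. Points of $\mc C_\infty$ are continuity points because the value is infinite and the map is lower semicontinuous (respectively, $\alpha$ is upper semicontinuous with $\limsup_{c'\to c_0}\alpha(c')=-\infty$), while points outside $\mc C_\infty$ are discontinuity points by density of $\mc C_\infty$. Your explicit appeal to Baire's theorem for that density, and your correct use of $\limsup$ for $\alpha$, are just slightly more careful than the paper's wording.
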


\begin{proof}
First, we recall from Proposition~\ref{PROP:infinite-pressure} that $p_{c}(t) = \infty$ if and only if $c \in \mc C_{\infty}$ and this forms a residual set of $0$ Hausdorff dimension.
Note that if $p_{c_0}(t) = \infty$, then upper semicontinuity of $c \mapsto p_c(t)$ in $c_0$ is immediate and lower semicontinuity follows from Proposition~\ref{PROP:lower-semicontinuity}, hence $c_0$ is a continuity point. Conversely, If $p_{c_0}(t) \neq \infty$, denseness of $\mc C_{\infty}$ implies that we can always choose a sequence of points $(c_n)_{n \in \N}$ with $p_{c_n}(t) = \infty$ and $\lim_{n \to \infty} c_n = c_0$. Hence, continuity of $c \mapsto p_c(t)$ in $c_0$ is impossible.
The same kind of reasoning applies to the function $c \mapsto - \alpha(c)$.
\end{proof}

\section{Continuity results on the pressure and the $L^q$ spectrum} \label{Section:Continuity-Pressure-Lq}

In this section, we will ultimately prove the continuity of $(c,t) \mapsto p_c(t)$ on $\mathbb{T} \times [0,\infty)$ and $c \mapsto \beta(c)$. Furthermore, we will derive the claimed continuity properties of the map $c \mapsto \beta_{\mu_c}(t)$.
By Proposition~\ref{PROP:lower-semicontinuity}, it remains to prove the upper semicontinuity of $c \mapsto p_c(t)$ for $t\geqslant 0$. The main point is to start from almost maximizing measures for nearby parameters, pass to a weak limit, and control the singular term by using truncated potentials.

\begin{prop}
\label{PROP:p-continuity}
For every $t \geqslant 0$, the map $c \mapsto p_c(t)$ is continuous.
\end{prop}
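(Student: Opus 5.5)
By Proposition~\ref{PROP:lower-semicontinuity} it suffices to prove upper semicontinuity of $c \mapsto p_c(t)$ for fixed $t \geqslant 0$. For $t \geqslant 0$ the pressure is finite, since $t\psi_c \leqslant 0$ gives $p_c(t) \leqslant \log 2$. Fix $c_0$ and a sequence $c_n \to c_0$ with $\lim_n p_{c_n}(t) = \limsup_{c \to c_0} p_c(t)$. By Theorem~\ref{THM:pressure-variation-full-t} we may work with $\sup_{\mu \in \mc M}$ throughout. For each $n$ we choose $\mu_n \in \mc M$ with
\[
p^{\mu_n}_{c_n}(t) \geqslant p_{c_n}(t) - 1/n .
\]
By weak-$*$ compactness of $\mc M$ we pass to a subsequence with $\mu_n \to \mu \in \mc M$. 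The aim is to show
\[
\limsup_n p^{\mu_n}_{c_n}(t) \leqslant h(\mu) + t\int \psi_{c_0} \dd \mu \leqslant p_{c_0}(t).
\]

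For the entropy term, the doubling map is expansive, so $\nu \mapsto h(\nu)$ is upper semicontinuous on $\mc M$, and therefore $\limsup_n h(\mu_n) \leqslant h(\mu)$.

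For the integral term we use the truncations $\psi_c^N = \max\{\psi_c, -N\}$ from the proof of Proposition~\ref{PROP:neg-t-pressure-variation}. Since $\psi_c \leqslant \psi_c^N$ and $t \geqslant 0$, we have $t\int \psi_{c_n} \dd \mu_n \leqslant t \int \psi^N_{c_n} \dd \mu_n$. The maps $\psi^N_c(x) = \psi_0^N(x-c)$ are translates of a single continuous function, so $\psi^N_{c_n} \to \psi^N_{c_0}$ uniformly. Combined with $\mu_n \to \mu$ weakly, this gives $\int \psi^N_{c_n} \dd\mu_n \to \int \psi^N_{c_0} \dd \mu$. Hence, for every $N$,
\[
\limsup_n p^{\mu_n}_{c_n}(t) \leqslant h(\mu) + t\int \psi^N_{c_0} \dd \mu .
\]
Letting $N \to \infty$, monotone convergence (the functions $\psi^N_{c_0}$ decrease to $\psi_{c_0}$ and are bounded above by $0$) gives $\int \psi^N_{c_0}\dd\mu \to \int \psi_{c_0} \dd \mu \in [-\infty, 0]$.

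If that integral is finite, the right-hand side becomes $p^{\mu}_{c_0}(t) \leqslant \sup_{\nu\in\mc M} p^\nu_{c_0}(t) = p_{c_0}(t)$, which closes the argument. If the integral is $-\infty$ and $t>0$, the bound becomes $-\infty$. This cannot happen, because $p_{c_n}(t) \geqslant h(\delta_{\text{fixed pt}})+t\psi_{c_n}(0)$ is bounded below near $c_0 \neq 0$, and $p_c(t) \geqslant t\beta(c)>-\infty$ in general. So the case $\int \psi_{c_0}\dd\mu=-\infty$ is excluded, or else it contradicts the choice of $\mu_n$. For $t=0$ the statement is trivial, since then $p_c(0)=\log 2$ for all $c$.

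The main obstacle is the interchange of the weak limit with the unbounded potential. Here it is handled by the one-sided truncation, which works precisely because $t \geqslant 0$ makes $t\psi_c$ upper semicontinuous and bounded above. This sign condition is what fails for $t<0$, consistent with Theorem~\ref{THM:main}(2). The only remaining point to check carefully is that Theorem~\ref{THM:pressure-variation-full-t} legitimately lets us compare with measures in all of $\mc M$ at the limit parameter $c_0$, even when $\mu$ charges $c_0$.
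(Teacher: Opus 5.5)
Your proof is correct and follows essentially the same route as the paper. It reduces to upper semicontinuity via Proposition~\ref{PROP:lower-semicontinuity}, takes a weak limit of almost-maximizing measures, and controls the singular term with the truncations $\psi^N_c$ together with upper semicontinuity of entropy. It then compares with $p_{c_0}(t)$ using the equality of the suprema over $\mc M_{c_0}$ and $\mc M$ for $t \geqslant 0$ (Theorem~\ref{THM:pressure-variations}). Your separate treatment of the case $\int \psi_{c_0} \dd \mu = -\infty$ is harmless but unnecessary, since the bound then reads $-\infty \leqslant p_{c_0}(t)$, which holds trivially.
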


\begin{proof}
Since the claim is trivial for $t=0$ let us fix some $t>0$ in the following.
Due to the lower semicontinuity established in Proposition~\ref{PROP:lower-semicontinuity}, it suffices to show that 
\[
\limsup_{c'\to c} p_{c'}(t) 
\leqslant p_c(t).
\]
Let $(c_m)_{m \in \N}$ be a sequence in $\mathbb{T}$ that converges to $c$. Given $\varepsilon$, choose for every $m \in \N$ a measure $\mu_m \in \mc M_{c_m}$ such that
\begin{equation}
\label{EQ:p-p-mu-variation}
\left|p_{c_m}(t) - p^{\mu_m}_{c_m}(t)\right| < \varepsilon.
\end{equation}
Let $\mu$ be a weak accumulation point of the measures $(\mu_m)_{m \in \N}$ along a maximising subsequence of $( p_{c_m}(t) )_{m \in \mathbb{N}}$. 
For $N \in \N$ and $c' \in \mathbb{T}$, let $\psi_{c'}^N(x) = \max \{ \psi_{c'}(x), -N \}$ be the continuous cut-off function at $-N$. Given a fixed $N \in \N$, note that $\psi^N_{c_m}$ converges uniformly to $\psi_c^N$ on $\mathbb{T}$, and hence
\[
\limsup_{m \to \infty} \int \psi_{c_m}^N \dd \mu_m = \int \psi^N_c \dd \mu.
\]
Since $\psi_c^N$ converges monotonically to $\psi_{c}$ as $N \to \infty$, we obtain via the monotone convergence theorem that
\[
\lim_{N \to \infty} \int \psi_c^N \dd \mu = \int \psi_c \dd \mu.
\]
Due to the fact that $\psi_{c_m} \leqslant \psi^N_{c_m}$ holds pointwise, we obtain
\[
\limsup_{m \to \infty} \int \psi_{c_m} \dd \mu_m
\leqslant \limsup_{m \to \infty} \int \psi^N_{c_m} \dd \mu_m 
= \int \psi^N_c \dd \mu
\xrightarrow{N \to \infty} \int \psi_c \dd \mu.
\]
Combining this with the upper semicontinuity of the measure-theoretic entropy on $(\mathbb{T},T)$ and recalling that $t>0$, we get 
\[
\limsup_{m \to \infty} p_{c_m}^{\mu_m}(t) 
= \limsup_{m \to \infty} h(\mu_m) + t\int \psi_{c_m} \dd \mu_m 
\leqslant h(\mu) + t\int \psi_c \dd \mu.
\]
By Theorem~\ref{THM:pressure-variations}, this expression is bounded above by $p_c(t)$ and we finally get that
\[
\limsup_{m \to \infty} p^{\mu_m}_{c_m}(t) \leqslant p_c(t).
\]
Due to \eqref{EQ:p-p-mu-variation}, this yields
\[
\limsup_{m \to \infty} p_{c_m}(t) 
\leqslant p_c(t) + \varepsilon
\]
and since $\varepsilon > 0$ was arbitrary, this proves the assertion.
\end{proof}

Having established continuity of the pressure for each fixed $t \ge 0$, we can now transfer this regularity to the upper endpoint $\beta(c)$ through its variational characterization, which could also be interpreted geometrically as the asymptotic slope of $p_c(t)$ as $t \to \infty$.

\begin{coro}
\label{CORO:beta-continuity}
The function $c \mapsto \beta(c)$ is continuous on $\mathbb{T}$.
\end{coro}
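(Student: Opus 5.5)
Lower semicontinuity of $c \mapsto \beta(c)$ is already given by Proposition~\ref{PROP:lower-semicontinuity}, so only upper semicontinuity remains. We will show $\limsup_{m\to\infty}\beta(c_m) \leqslant \beta(c)$ for every sequence $c_m \to c$. Two routes suggest themselves: pass to the limit directly with measures, mimicking the proof of Proposition~\ref{PROP:p-continuity}, or use the pressure through $\beta(c) = \lim_{t\to\infty} p_c(t)/t$. The direct route is the cleanest, and we will carry it out.

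Pass to a subsequence realising $\limsup_m \beta(c_m)$. By Corollary~\ref{CORO:alpha-beta-version} (or simply by the definition over $\mc M$), choose for each $m$ a measure $\mu_m \in \mc M$ with $\int \psi_{c_m} \dd \mu_m > \beta(c_m) - \varepsilon$. Passing to a further subsequence, let $\mu_m \to \mu$ weakly; the limit $\mu$ is $T$-invariant. For fixed $N$, the truncations $\psi^N_{c_m} = \max\{\psi_{c_m},-N\}$ converge uniformly to $\psi^N_c$. Combined with weak convergence, this gives
\[
\limsup_{m} \int \psi_{c_m}\dd\mu_m \leqslant \lim_m \int \psi^N_{c_m}\dd\mu_m = \int \psi^N_c \dd \mu .
\]
Letting $N\to\infty$ and using monotone convergence, the right-hand side tends to $\int\psi_c\dd\mu \leqslant \beta(c)$, where the last inequality is just the definition of $\beta(c)$ as a supremum over $\mc M$. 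Hence $\limsup_m \beta(c_m) \leqslant \beta(c)+\varepsilon$, and since $\varepsilon>0$ is arbitrary the claim follows. Note that $\beta(c)\leqslant 0$ is finite, and $\beta(c) > -\infty$ because, for example, the Lebesgue measure gives a finite integral; so no infinite cases need separate treatment.

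The only delicate step is the interchange of the limit in $m$ with the unbounded integrand. The uniform convergence of the truncations $\psi^N_{c_m}\to\psi^N_c$ handles it: this holds because $\psi^N_{c'}(x)=\psi^N_0(x-c')$ and $\psi^N_0$ is uniformly continuous. Since the domination $\psi_{c_m}\leqslant \psi^N_{c_m}$ goes in the right direction, this yields upper semicontinuity with no need for uniform integrability.

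As a consistency check on the alternative route: by Corollary~\ref{CORO:joint-continuity}-type uniform Lipschitz estimates, $p_c(t)/t$ converges to $\beta(c)$ uniformly in $c$, with error at most $\log 2/t$. Each $c\mapsto p_c(t)/t$ is continuous by Proposition~\ref{PROP:p-continuity}, so $\beta$ is a uniform limit of continuous functions and hence continuous. Indeed, $t\beta(c)\leqslant p_c(t)\leqslant t\beta(c)+\log 2$ follows directly from $0\leqslant h\leqslant\log 2$ together with Theorem~\ref{THM:pressure-variation-full-t}. This is perhaps the slickest presentation and could serve as the written proof.
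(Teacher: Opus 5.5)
Your proposal is correct. Your main (direct) argument takes a different route from the paper; your alternative is a sharpening of the paper's own argument.

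\textbf{The paper's route.} The paper never works with measures here. It writes $\beta(c)=\inf_{t>0}t^{-1}p_c(t)$, where each $c\mapsto t^{-1}p_c(t)$ is continuous by Proposition~\ref{PROP:p-continuity}. This makes $\beta$ upper semicontinuous as an infimum of continuous functions. The paper then combines this with the lower semicontinuity from Proposition~\ref{PROP:lower-semicontinuity}.

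\textbf{Your direct route.} You rerun the weak-limit and truncation mechanism of Proposition~\ref{PROP:p-continuity} directly on $\beta$. Since there is no entropy term, you need no upper semicontinuity of entropy. The closing inequality $\int\psi_c\dd\mu\leqslant\beta(c)$ holds by the definition of $\beta(c)$ as a supremum over $\mc M$. So your upper-semicontinuity half does not use the equality of suprema over $\mc M_c$ and $\mc M$ (Theorem~\ref{THM:pressure-variations}), on which Proposition~\ref{PROP:p-continuity} depends. It is more elementary and self-contained. The paper's version is shorter because it recycles a result already established.

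\textbf{Your alternative route.} For $t>0$ you have the sandwich $t\beta(c)\leqslant p_c(t)\leqslant t\beta(c)+\log 2$. This is exactly what underlies the paper's identity $\beta(c)=\inf_{t>0}t^{-1}p_c(t)$, but it gives more: $t^{-1}p_c(t)$ converges to $\beta(c)$ uniformly in $c$. Continuity of $\beta$ then follows at once as a uniform limit of continuous functions, with no separate appeal to lower semicontinuity of $\beta$.

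The only blemish is that you attribute this uniform bound to ``Lipschitz estimates'' in the spirit of Corollary~\ref{CORO:joint-continuity}. It actually comes solely from $0\leqslant h\leqslant\log 2$ together with Theorem~\ref{THM:pressure-variation-full-t}, as you then correctly state.
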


\begin{proof}
Note that for all $c \in \mathbb{T}$, we have
\[
\beta(c) = \inf_{t > 0} \sup_{\mu \in \mc M_c} t^{-1} h(\mu) + \int \psi_c \dd \mu 
= \inf_{t>0} t^{-1} p_c(t).
\]
Since $t \mapsto t^{-1} p_c(t)$ is continuous by Proposition~\ref{PROP:p-continuity}, we obtain that $\beta(c)$ is the infimum over a family of continuous function. As such, it is upper semicontinuous. Since $c \mapsto \beta(c)$ is lower semicontinuous by Proposition~\ref{PROP:lower-semicontinuity}, we get that this map is in fact continuous.
\end{proof}

In order to prove that $(c,t)\mapsto p_c(t)$ is continuous on an appropriate region, it will be useful to establish an equicontinuity property of the family $(p_c)_{c \in \mathbb{T}}$ on $\mathbb{R}_+$. As we shall see in the next section, this will also be useful for the Birkhoff spectrum.

\begin{lemma}
\label{LEM:Lipshitz-control}
For every $c \in \mathbb{T}$, the function $p_c$ is Lipshitz continuous with Lipshitz constant $L = 2\log(2)$ on $\mathbb{R}_+$.
\end{lemma}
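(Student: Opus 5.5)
The plan is to combine the convexity and monotonicity of $p_c$ from the Remark in Section~2 with one explicit lower bound on $p_c(t)$, obtained by testing the variational principle against Lebesgue measure. For a convex, non-increasing function on $[0,\infty)$, the chord slopes are non-positive and bounded below by the chord slopes starting at $0$. So it suffices to show $p_c(t) \geqslant p_c(0) - 2\log(2)\, t$ for all $t \geqslant 0$.

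\emph{Step 1: finiteness and the value at $0$.} Since $\psi_c \leqslant 0$ and $h(\mu) \leqslant \log 2$, we have $p_c(t) \leqslant \log 2$ for all $t \geqslant 0$, so $p_c$ is real-valued on $\mathbb{R}_+$. At $t=0$ we have $p_c(0) = \sup_{\mu} h(\mu) = \log 2$. By Theorem~\ref{THM:pressure-variation-full-t} the supremum may be taken over all of $\mc M$, and Lebesgue measure $\lambda$ (which has entropy $\log 2$) attains it.

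\emph{Step 2: the lower bound via Lebesgue measure.} Theorem~\ref{THM:pressure-variation-full-t} lets us use the full-support measure $\lambda \in \mc M$, even though $\lambda \notin \mc M_c$. We get
\[
p_c(t) \geqslant h(\lambda) + t \int \psi_c \dd \lambda = \log 2 + 2t \int_0^1 \log|\sin(\pi x)| \dd x = \log 2 - 2\log(2)\, t,
\]
using rotation invariance of $\lambda$ and the classical integral $\int_0^1 \log|\sin \pi x| \dd x = -\log 2$. This is the only place where the specific form of $\psi_c$ enters. It is also the step I would check most carefully: the extension of the supremum from $\mc M_c$ to $\mc M$ is exactly what Theorem~\ref{THM:pressure-variation-full-t} (via Theorem~\ref{THM:pressure-variations} for $t \geqslant 0$) provides.

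\emph{Step 3: the convexity argument.} Let $0 \leqslant s < t$. By the three-chord lemma for the convex function $p_c$,
\[
0 \geqslant \frac{p_c(t) - p_c(s)}{t-s} \geqslant \frac{p_c(t) - p_c(0)}{t} \geqslant -2\log 2 .
\]
The upper bound $0$ comes from monotonicity. For $s = 0$ the middle inequality is an equality, and the last inequality is Step~2. Hence $|p_c(t) - p_c(s)| \leqslant 2\log(2)\,(t-s)$, with the constant independent of $c$, as claimed.
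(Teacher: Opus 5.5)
Your proof is correct and follows essentially the same route as the paper: the Lebesgue lower bound $p_c(t)\geqslant \log(2)(1-2t)$ (using that the supremum may be taken over all of $\mc M$), combined with the chord inequality for the convex function $p_c$ anchored at $0$, and monotonicity coming from $\psi_c\leqslant 0$. The paper writes the convexity step as $p_c(x)\leqslant p_c(0)+\frac{x}{y}\bigl(p_c(y)-p_c(0)\bigr)$, which is your three-chord inequality rearranged.
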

\begin{proof}
We have that 
\[
\int \psi_c \dd \lambda = -2 \log 2 
\]
where $\lambda$ denotes the Lebesgue measure. Since $p_c(t) \geqslant p_c^{\lambda}(t)$
for every $t \in \mathbb{R}$, we have that 
\begin{equation}
\label{EQ:p_c_lower-bound}
p_c(t) \geqslant \log(2) (1- 2t).
\end{equation}
For $0 \leqslant x < y$, we have due to the convexity of $p_c$ that
\[
p_c(x) \leqslant p_c(0) + \frac{x}{y} (p_c(y) - p_c(0)),
\]
and hence, via \eqref{EQ:p_c_lower-bound} and recalling $p_c(0) = \log(2)$ (the entropy of the doubling map), we get 
\[
p_c(x) - p_c(y) \leqslant (1-x/y) (p_c(0) - p_c(y)) \leqslant (y-x) 2 \log(2).
\]
Note that the pointwise relation $\psi_c \leqslant 0$ forces $p_c(t)$ to be monotonically decreasing in $t$. Thus, we obtain
\[
- 2 \log(2) \leqslant \frac{p_c(y) - p_c(x)}{y-x} \leqslant 0,
\]
which proves that $p_c$ is Lipschitz  continuous with Lipschitz  constant $2 \log(2)$ on $\mathbb{R}_+$.
\end{proof}

We can get the claimed joint continuity result for the map $(c,t)\mapsto p_c(t)$ as an immediate application of the Lipschitz equicontinuity on $t$ and the continuity results on $c$.
\begin{coro}
The map
$
F\colon \mathbb T \times [0,\infty) \to \mathbb R,\,(c,t)\mapsto p_c(t),
$
is continuous. 
\end{coro}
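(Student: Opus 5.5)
We show joint continuity at a fixed point $(c_0,t_0) \in \mathbb{T} \times [0,\infty)$ with a standard triangle-inequality argument. It combines the uniform-in-$c$ Lipschitz bound in $t$ from Lemma~\ref{LEM:Lipshitz-control} with the continuity in $c$ for each fixed $t$ from Proposition~\ref{PROP:p-continuity}. First we note that $F$ is real-valued on this region. Indeed, $p_c(t) \leqslant p_c(0) = \log(2)$ by monotonicity, and $p_c(t) \geqslant \log(2)(1-2t)$ by \eqref{EQ:p_c_lower-bound}. So no infinite values occur and the usual metric notion of continuity applies.

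Take a sequence $(c_m,t_m) \to (c_0,t_0)$ with $t_m \geqslant 0$. We split
\[
|p_{c_m}(t_m) - p_{c_0}(t_0)| \leqslant |p_{c_m}(t_m) - p_{c_m}(t_0)| + |p_{c_m}(t_0) - p_{c_0}(t_0)|.
\]
Both $t_m$ and $t_0$ lie in $\mathbb{R}_+$, so Lemma~\ref{LEM:Lipshitz-control} bounds the first term by $2\log(2)\,|t_m - t_0|$. The crucial point is that the Lipschitz constant does not depend on $c_m$. The second term tends to $0$ by Proposition~\ref{PROP:p-continuity} applied at the fixed value $t_0 \geqslant 0$. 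Hence the sum tends to $0$, which gives sequential continuity and therefore continuity, since $\mathbb{T}\times[0,\infty)$ is a metric space.

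No step here is a real obstacle. The only delicate point is making sure the Lipschitz estimate is uniform in $c$, and Lemma~\ref{LEM:Lipshitz-control} provides exactly this. Its constant $2\log 2$ comes only from $p_c(0)=\log 2$, from $\int\psi_c \dd\lambda = -2\log 2$ (which is invariant under rotation), and from convexity. We also need $t_0 = 0$ covered, and it is: both the lemma and the proposition include $t=0$, and at $t=0$ the proposition is trivial.
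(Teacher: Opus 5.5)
Your proposal is correct and follows the paper's proof essentially verbatim. It uses the same triangle-inequality split, with the first term controlled by the $c$-uniform Lipschitz bound of Lemma~\ref{LEM:Lipshitz-control} and the second by Proposition~\ref{PROP:p-continuity} at the fixed value $t_0$. Your check that $F$ is finite-valued via $\log(2)(1-2t)\leqslant p_c(t)\leqslant \log(2)$ is a small point the paper leaves implicit.
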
 
\begin{proof}
By Proposition~\ref{PROP:p-continuity}, for every fixed $t\geqslant 0$, the map $c\mapsto p_c(t)$ is continuous.
By Lemma~\ref{LEM:Lipshitz-control}, for every $c\in\mathbb T$, the function $t\mapsto p_c(t)$ is Lipschitz on
$\mathbb{R}_+$ with Lipschitz constant $2\log 2$, uniformly in $c$. Hence, if
$(c_n,t_n)\to (c,t)$, then
\[
|p_{c_n}(t_n)-p_c(t)|
\leqslant 
|p_{c_n}(t_n)-p_{c_n}(t)| + |p_{c_n}(t)-p_c(t)|.
\]
The first term tends to $0$ by the Lipschitz equicontinuity, and the second tends to $0$ by
continuity of $c\mapsto p_c(t)$ at the fixed value $t$. Therefore
$p_{c_n}(t_n)\to p_c(t)$.
\end{proof}

Recall from \eqref{EQ:lq-pressure} that $\beta_{\mu_c}(t) = p_c(t)/\log(2)$ for all $c \in \mathbb{T}\setminus\{1/2\}$ and all $q \in \mathbb{R}$. Furthermore, it was already discussed in \cite{FSS22,GKS} that for $c=1/2$, we have that
\[
p_{1/2}(t) = \max \{(1-2t),0 \} \log(2),
\]
for all $t \in \mathbb{R}$ and $\mu_{1/2} = \delta_0$. The latter identity directly entails that $\beta_{\mu_{1/2}}(t) = 0$ for all $t \in \mathbb{R}$. Given the statement about the parameter dependence of the pressure function in Theorem~\ref{THM:main}, this is all we need to prove a corresponding result about the $L^q$ spectrum.

\begin{proof}[Proof of Prop.~\ref{PROP:Lq-continuity}]
By the observations above we have that $\beta_{\mu_{1/2}}(t) \leqslant p_{1/2}(t)/\log(2)$ for all $t \in \mathbb{R}$, with equality if and only if $t \geqslant 1/2$. Since $\beta_{\mu_c}(t) = p_{c}(t)$ for all other values of $c,t$, we obtain that lower semicontinuity of $\ell_t \colon c \mapsto \beta_{\mu_c}(t)$ is directly inherited from the corresponding statement about $c \mapsto p_c(t)$. 
For $t<0$, we observe that $\beta_{\mu_{1/2}}(t) = 0$ is finite whereas the values $c$ with $\beta_{\mu_c}(t) = \infty$ lie dense, and hence $c=1/2$ is not a continuity point for $\ell_t$. It follows that the continuity points of $\ell_t$ and $q_t$ coincide and are given by $\mc C_{\infty}$. For $t \geqslant 1/2$, we have that $\beta_{\mu_c}(t) = p_c(t)/\log(2)$ for all $c \in \mathbb{T}$, which yields continuity of $\ell_t$. 
Conversely, if $t \in [0,1/2)$, we have $\beta_{\mu_{1/2}}(t) < p_{1/2}(t)$, which breaks the continuity of $\ell_t$ at $c=1/2$.
\end{proof}

\section{The Birkhoff spectrum} \label{Section:Birkhoff}
We start with some basic facts about the Legendre transform of $p_c$. Recall from Theorem~\ref{THM:Birkhoff-spectrum} that $p_c^{\ast} = \log(2) f_{\psi_c}$ on its interior domain $(\alpha(c), \beta(c))$. Furthermore, since integration with respect to Lebesgue measure $\lambda$ yields $\int \psi_c \dd \lambda = - 2 \log(2)$, we immediately get from Birkhoff's ergodic theorem that the Birkhoff average $b_{\psi_c}(x)$ equals $-2 \log(2)$ for a set of full Lebesgue measure, and hence also of full Hausdorff dimension. Hence, for $\beta_0 := -2 \log(2)$, we obtain that
\begin{equation}
\label{EQ:Legendre-maximum}
p_c^{\ast}(\beta_0) = \log(2) f_{\psi_c}(\beta_0) = \log(2).
\end{equation}
Clearly, we also have $0 \leqslant p_c^{\ast}(\beta) \leqslant \log(2)$ for all $\beta \in (\alpha(c),\beta(c))$ due to the fact that $f_{\psi_c}$ can only take values in $[0,1]$. For $\beta \in (\beta_0,\beta(c))$, we can restrict the domain for the infimum in the definition of the Legendre transform.

\begin{lemma}
\label{LEM:Legendre-transform-minimizer}
Let $c \in \mathbb{T}$, and assume that $\beta \geqslant \beta_0 = - 2 \log(2)$ and $\beta(c) - \beta \geqslant \Delta$ for some $\Delta>0$. Then,
\[
p_c^{\ast}(\beta) = \min_{0 \leqslant t \leqslant \log(2)/\Delta } (p_c(t) - \beta t).
\]
\end{lemma}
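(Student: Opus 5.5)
We prove the lemma by showing that outside the window $[0,\log(2)/\Delta]$ the function $g(t) := p_c(t) - \beta t$ is never smaller than somewhere inside it. Then we check that the minimum on the window is attained.

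\emph{Negative $t$.} Since $p_c$ is convex and finite on $\mathbb{R}_+$ (Lemma~\ref{LEM:Lipshitz-control}), it has a right derivative at $0$. Because $p_c(t) \geqslant h(\lambda) + t\int \psi_c \dd\lambda = \log(2) + \beta_0 t$ for all $t\in\mathbb{R}$, with equality at $t=0$ (recall $p_c(0)=\log(2)$), the line $t\mapsto \log(2) + \beta_0 t$ supports the graph of $p_c$ at $0$. So for $t<0$ we get
\[
g(t) \geqslant \log(2) + (\beta_0-\beta)t \geqslant \log(2) = g(0),
\]
since $\beta \geqslant \beta_0$ and $t<0$. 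This also covers $t$ with $p_c(t)=\infty$. Hence the infimum over $\mathbb{R}$ equals the infimum over $[0,\infty)$.

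\emph{Large $t$.} By Theorem~\ref{THM:pressure-variation-full-t}, for $t>0$,
\[
p_c(t) = \sup_{\mu\in\mc M}\Bigl(h(\mu)+t\int\psi_c\dd\mu\Bigr) \geqslant t\,\beta(c) - \varepsilon t
\]
for any $\varepsilon>0$: choose $\mu$ with $\int\psi_c\dd\mu$ close to $\beta(c)$ and use $h(\mu)\geqslant 0$. Letting $\varepsilon\to0$ gives $p_c(t)\geqslant t\beta(c)$. Therefore
\[
g(t) \geqslant t(\beta(c)-\beta) \geqslant t\Delta.
\]
For $t \geqslant \log(2)/\Delta$ this gives $g(t) \geqslant \log(2) = g(0)$. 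So $g(t)\geqslant g(0)$ for all $t$ outside $[0,\log(2)/\Delta]$, and the infimum over $\mathbb{R}$ equals the infimum over this compact interval.

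\emph{Attainment.} On $[0,\log(2)/\Delta]$ the function $p_c$ is finite and continuous (Lipschitz by Lemma~\ref{LEM:Lipshitz-control}), so $g$ is continuous there and the infimum is a minimum.

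The only delicate point is the bound $p_c(t)\geqslant t\beta(c)$. It needs $\beta(c)$ to be approximated by measures with non-negative entropy; since $h\geqslant0$, that is automatic. If $\beta(c)$ were not finite there would be an issue, but $\psi_c\leqslant 0$ gives $\beta(c)\leqslant 0$ and $\beta(c)\geqslant\beta_0$, so it is finite.
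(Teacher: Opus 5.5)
Your proof is correct. It has the same skeleton as the paper's proof: a supporting line of slope $\beta_0$ rules out $t<0$, a line of slope at least $\beta+\Delta$ rules out $t>\log(2)/\Delta$, and continuity of $p_c$ on the compact window gives attainment.

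The difference is where the two lower bounds come from. The paper works on the Legendre side. It inserts $\alpha=\beta_0$ and $\alpha=\beta+\Delta$ into $p_c(t)-\beta t\geqslant p_c^{\ast}(\alpha)+(\alpha-\beta)t$. It then uses $p_c^{\ast}(\beta_0)=\log(2)$ and $p_c^{\ast}(\alpha)\geqslant 0$, both read off from the identification $p_c^{\ast}=\log(2) f_{\psi_c}$ of Theorem~\ref{THM:Birkhoff-spectrum}, together with Birkhoff's theorem for Lebesgue measure. You get the equivalent bounds $p_c(t)\geqslant \log(2)+\beta_0 t$ and $p_c(t)\geqslant t\beta(c)$ for $t\geqslant 0$ directly from the variational formula over $\mc M$ and $h\geqslant 0$.

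Your route is more elementary and self-contained because it does not need the multifractal theorem. It therefore covers every $c\in\mathbb{T}$, as the lemma is stated, whereas Theorem~\ref{THM:Birkhoff-spectrum} excludes $c=1/2$. It also handles the boundary case $\beta+\Delta=\beta(c)$ cleanly. There the paper's choice $\alpha=\beta+\Delta$ leaves the open interval $(\alpha(c),\beta(c))$ on which $p_c^{\ast}=\log(2)f_{\psi_c}\geqslant 0$ is available, and your bound $p_c(t)\geqslant t\beta(c)$ is exactly what fills that case. The paper's version is shorter given the machinery it has already assembled. Its comparison with $p_c^{\ast}(\beta)\leqslant\log(2)$ amounts to the same thing as your comparison with $g(0)=\log(2)$.

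Two small remarks. The right-derivative and supporting-line comment is unnecessary, since you only use the pointwise bound coming from $\lambda$. That bound itself already requires Theorem~\ref{THM:pressure-variation-full-t}, because $\lambda\notin\mc M_c$, so cite the theorem there and not only in the large-$t$ step.
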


\begin{proof}
For $\beta = \beta_0$, recall from \eqref{EQ:Legendre-maximum} 
that $p_c^{\ast}(\beta_0) = \log(2) = p_c(0)$, and hence the infimum is 
obtained at $t=0$. In the following, let $\beta> \beta_0$.

For $\alpha, \beta \in (\alpha(c),\beta(c))$, let $\delta = (\alpha - \beta)$ and observe that by definition of the Legendre transform we have that
$
p_c^{\ast}(\alpha) \leqslant p_c(t) - \alpha t = p_c(t) - \beta t - \delta t,
$
and hence
\[
p_c(t) - \beta t \geqslant p_c^{\ast}(\alpha) + \delta t,
\]
for all $t \in \mathbb{R}$. With $\alpha = \beta_0$ and using that $p_c^{\ast}(\beta_0) = \log(2)$, we obtain that $\delta < 0$ by assumption and hence for $t <0$,
\[
p_c(t) - \beta t \geqslant \log(2) + \delta t > \log(2) \geqslant p_c^{\ast}(\beta). 
\]
That is, we can restrict the infimum to $t\geqslant 0$ for the calculation of $p_c^{\ast}(\beta)$. Similarly, for $\alpha = \beta + \Delta \in (\alpha(c),\beta(c))$, and $t > \log(2) / \Delta$, we obtain
\[
p_c(t) - \beta t \geqslant p^{\ast}_c(\alpha) + \Delta t \geqslant \Delta t > \log(2) \geqslant p_c^{\ast}(\beta).
\]
As a consequence,
\[
p_c^{\ast}(\beta) = \inf_{0 \leqslant t \leqslant \log(2)/\Delta } (p_c(t) - \beta t)
= \min_{0 \leqslant t \leqslant \log(2)/\Delta } (p_c(t) - \beta t),
\]
where the last equality follows from the observation that $p_c(t) - \beta t$ is a continuous function in $t$ for $t \geqslant 0$, and therefore obtains its minimum on a compact interval.
\end{proof}

To prove continuity of $c \mapsto p_c^{\ast}(\beta)$ in an appropriate region, we now combine the compactness reduction from Lemma~\ref{LEM:Legendre-transform-minimizer} with the joint continuity of the family $(p_c)_{c \in \mathbb{T}}$ established in Section~\ref{Section:Continuity-Pressure-Lq}.

\begin{prop}
\label{PROP:Birkhoff-continuity}
Let $\beta \geqslant -2 \log(2)$ and $\beta \neq \beta(c_0)$ for some $c_0 \in \mathbb{T} \setminus \{ 1/2\}$. Then, $c \mapsto f_{\psi_c}(\beta)$ is continuous at the point $c_0$.
\end{prop}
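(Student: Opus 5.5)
The plan is to split into the two cases $\beta > \beta(c_0)$ and $-2\log(2) \leqslant \beta < \beta(c_0)$, and in each case use the continuity of $\beta(\cdot)$ from Corollary~\ref{CORO:beta-continuity} to transfer the case condition to all $c$ in a neighbourhood of $c_0$.

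\textbf{Case $\beta > \beta(c_0)$.} By Corollary~\ref{CORO:beta-continuity} there is a neighbourhood $U$ of $c_0$, which we may take to avoid $1/2$, such that $\beta > \beta(c)$ for all $c \in U$. Theorem~\ref{THM:Birkhoff-spectrum} then gives $f_{\psi_c}(\beta) = 0$ for all $c \in U$. Hence $c \mapsto f_{\psi_c}(\beta)$ is locally constant near $c_0$, and in particular continuous there.

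\textbf{Case $-2\log(2) \leqslant \beta < \beta(c_0)$.} Set $\Delta := (\beta(c_0) - \beta)/2 > 0$.
\begin{enumerate}
\item By Corollary~\ref{CORO:beta-continuity} there is a neighbourhood $U \subseteq \mathbb{T} \setminus \{1/2\}$ of $c_0$ with $\beta(c) - \beta \geqslant \Delta$ for all $c \in U$.
\item For every $c \in U$ we have $\alpha(c) \leqslant \beta_0 \leqslant \beta < \beta(c)$. The first inequality holds because Lebesgue measure is invariant with $\int \psi_c \dd\lambda = \beta_0$. If $\beta > \beta_0$, or if $\alpha(c) < \beta_0$, then $\beta$ lies in the open interval $(\alpha(c),\beta(c))$, and Theorem~\ref{THM:Birkhoff-spectrum} gives $f_{\psi_c}(\beta) = p_c^{\ast}(\beta)/\log(2)$.
\item The degenerate situation $\beta = \beta_0 = \alpha(c)$ is handled directly: $f_{\psi_c}(\beta_0) = 1$ by the Lebesgue argument preceding \eqref{EQ:Legendre-maximum}. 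One checks that $p_c^{\ast}(\beta_0) = \log 2$ still holds there, since $p_c(t) \geqslant \log 2 - 2t\log 2$ gives $p_c(t) - \beta_0 t \geqslant \log 2$ for all $t$, with equality at $t=0$.
\item Lemma~\ref{LEM:Legendre-transform-minimizer} then yields, for all $c \in U$,
\[
f_{\psi_c}(\beta) = \frac{1}{\log 2} \min_{t \in K} \bigl(p_c(t) - \beta t\bigr), \qquad K := [0, \log(2)/\Delta],
\]
with the compact interval $K$ independent of $c$.
\item By the joint continuity of $(c,t) \mapsto p_c(t)$ on $\mathbb{T} \times [0,\infty)$, the function $(c,t) \mapsto p_c(t) - \beta t$ is continuous on $\overline{U} \times K$, hence uniformly continuous on $\overline{U'} \times K$ for a compact neighbourhood $U' \subseteq U$ of $c_0$. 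It follows that $\sup_{t \in K} |p_c(t) - p_{c_0}(t)| \to 0$ as $c \to c_0$.
\item Since the minimum over $K$ is $1$-Lipschitz with respect to the sup norm, $f_{\psi_c}(\beta) \to f_{\psi_{c_0}}(\beta)$ as $c \to c_0$.
\end{enumerate}

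\textbf{Main obstacle.} The delicate point is the boundary value $\beta = \beta_0$, where $\beta$ may fail to lie in the open interval $(\alpha(c),\beta(c))$ required by Theorem~\ref{THM:Birkhoff-spectrum}. I would treat it by the direct Lebesgue-measure argument of step 3, which shows that both sides equal $1$ near $c_0$. The other point to check is that Lemma~\ref{LEM:Legendre-transform-minimizer} applies with the same $\Delta$ for all $c \in U$; this is exactly what the continuity of $\beta(\cdot)$ in step 1 secures.
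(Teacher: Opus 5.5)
Your proposal is correct and follows the paper's proof: the same case split, continuity of $\beta(\cdot)$ to obtain a uniform $\Delta$ on a neighbourhood of $c_0$, Lemma~\ref{LEM:Legendre-transform-minimizer} to reduce $p_c^{\ast}(\beta)$ to a minimum over a fixed compact $t$-interval, and joint continuity of $(c,t)\mapsto p_c(t)$ to conclude. The differences are minor. You finish via uniform convergence on $K$ and the $1$-Lipschitz property of the minimum, where the paper gives separate upper and lower semicontinuity arguments. You also treat explicitly the degenerate case $\beta=\beta_0=\alpha(c)$ and the exclusion of $1/2$ from the neighbourhood, which the paper leaves implicit.
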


\begin{proof}
First, let us assume that $\beta > \beta(c_0)$. Since $c \mapsto \beta(c)$ is continuous by Corollary~\ref{CORO:beta-continuity}, we obtain that $\beta > \beta(c)$ for all $c$ sufficiently close to $c_0$. But this implies that $f_{\psi_{c}}(\beta) = 0$ for all such $c$, and we obtain that $f_{\psi_{c_0}}(\beta) = 0$ is locally constant around $c_0$.

If $- 2 \log(2) \leqslant \beta < \beta(c_0)$, we can choose $r>0$ and $\Delta>0$ such that for all $d(c,c_0)<r$, we have $\beta + \Delta < \beta(c_n)$.
By Lemma~\ref{LEM:Legendre-transform-minimizer}, this implies that
\[
p^{\ast}_{c}(\beta) = \min_{0 \leqslant t \leqslant \log(2)/\Delta} (p_c(t) - \beta t),
\]
on $B_r(c_0)$. By Proposition~\ref{PROP:p-continuity}, $c \mapsto p_c(t)$ is a continuous function for $t \geqslant 0$, and hence, as an infimum over continuous functions, $c \mapsto p^{\ast}_c(\beta)$ is upper semicontinuous on $B_r(c_0)$. 
In order to show lower semicontinuity at $c_0$ pick an arbitrary sequence $(c_n)_{n \in \N}$ in $\mathbb{T}$ that converges to $c_0$. For each $n \in \N$ let $t_n \in [0,\log(2)/\Delta]$ be a value such that $p^{\ast}_{c_n}(\beta) = p_{c_n}(t_n) - \beta t_n$. Let $t_0$ be any accumulation point of the sequence $(t_n)_{n \in \N}$ along a minimizing subsequence of $(p_{c_n}^{\ast}(\beta))_{n \in\N}$.
By Corollary~\ref{CORO:joint-continuity}, for $\varepsilon > 0$, we get for all large enough $n$,
\[
p^{\ast}_{c_n}(\beta) = p_{c_n}(t_n) - \beta t_n \geqslant p_{c_0}(t_0) - \beta t_0 - \varepsilon
\geqslant 
p^{\ast}_{c_0}(\beta) - \varepsilon.
\]
This implies that
$
\liminf_{n \to \infty} p^{\ast}_{c_n}(\beta)
 \geqslant p_{c_0}^{\ast}(\beta) - \varepsilon
$,
which shows the claimed lower semicontinuity since $\varepsilon>0$ was arbitrary.
\end{proof}

\begin{remark}
Note that for $\beta< - 2\log(2)$ we have that $f_{\psi_c}(\beta)\equiv 1$ for $c \in \mathcal C_{\infty}$, whereas the domain of $f_{\psi_c}$ is bounded for all $c \notin \mathcal C_{\infty}$ \cite[Thm.~B]{FSS22}. The statement of Proposition~\ref{PROP:Birkhoff-continuity} therefore cannot be extended to the halfline $(-\infty,-2\log(2))$.
On the other hand, it seems natural to try to extend the result in Proposition~\ref{PROP:Birkhoff-continuity} to $\beta = \beta(c_0)$. In this case, small perturbation of $c_0$ turn $\beta$ into a point close to the boundary, and the boundary is infinitely steep. Hence, one would need a detailed control over the joint continuity properties of the family $\{p_c^{\ast}\}_{c \in \mathbb{T}}$ at the boundary of the domain to decide whether the result in Proposition~\ref{PROP:Birkhoff-continuity} can be extended. This seems currently out of reach and we leave it as an open problem for future research. 
\end{remark}

\section*{Acknowledgements} 

The authors want to thank the Institut Mittag-Leffler for its kind hospitality. PG acknowledges support from the German Research Foundation (DFG) through Project
509427705 and Project Z-72-00539-00-16100116.

\end{document}